 \DeclareMathOperator*{\argmin}{arg\,min} 
\def\PP{\mathbb{P}}
\def\EE{\mathbb{E}}
\def\lt{\left}
\def\rt{ \right}
\newcommand{\mathsym}[1]{{}}
\def\leq{\leqslant}
\def\geq{\geqslant}
\newtheorem{thm}{Theorem}[section]
\newtheorem{prop}{Proposition}[section]
\newtheorem{lem}{Lemma}[section]
\newtheorem{rem}{Remark}[section]
\newtheorem{cor}{Corollary}[section]
\newtheorem{defn}{Definition}[section]
\newtheorem{exm}{Example}[section]
\newtheorem{ass}{Assumption}[section]
\numberwithin{equation}{section} \allowdisplaybreaks[4]
\newcommand{\be}{\begin{equation}}
\newcommand{\ee}{\end{equation}}
\newcommand{\ba}{\begin{eqnarray*}}
\newcommand{\ea}{\end{eqnarray*}}
\def\lb{\label}
\def\d{\mathrm{d}}
\begin{document}
\date{}
\pagestyle{plain}
\title{Algorithms for  zero-sum stochastic games with the  risk-sensitive average   criterion \footnote{This work was   supported by the National Key Research and Development Program of China (2022YFA1004600)}}
\author{Fang Chen\footnote{  Beijing International Center for Mathematical Research, Peking University, Beijing 100871, China. Email: chenfang@pku.edu.cn}, Xianping Guo\footnote{Corresponding author. School of Mathematics, Sun Yat-Sen University, Guangzhou 510275, China. Email: mcsgxp@mail.sysu.edu.cn},
Xin Guo \footnote{School of Science, Sun Yat-Sen University, Guangzhou 510275, China. Email: guox87@mail.sysu.edu.cn},
Junyu Zhang\footnote{School of Mathematics, Sun Yat-Sen University, Guangzhou 510275, China. Email:   mcszhjy@mail.sysu.edu.cn }}
\date{}
\maketitle \underline{}
	
{\bf Abstract:} This paper is an attempt to compute the value and   saddle points of   zero-sum risk-sensitive average stochastic games. For the average games with finite states and actions,  we first introduce the so-called  irreducibility coefficient and then establish its equivalence to the irreducibility condition.  Using this   equivalence,   we develop an iteration algorithm to  compute  $\varepsilon$-approximations  of the value  (for any given $\varepsilon>0$) and show its convergence. Based on   $\varepsilon$-approximations of the value and the   irreducibility coefficient, we further propose another iteration algorithm,
which is proved to obtain $\varepsilon$-saddle points in   finite  steps.
 Finally,  a  numerical example of energy management in smart grids is provided to   illustrate our results.

\vskip 0.2 in \noindent{\bf Key words:}  Risk-sensitive  average criterion; stochastic game;  $\varepsilon$-approximation of the value; $\varepsilon$-saddle point; iteration algorithm.
\vskip 0.2 in \noindent {\bf MSC 2020 Subject Classification.} Primary: 91A25; secondary  91A15
	
\setlength{\baselineskip}{0.25in}
		
\section{Introduction}

Stochastic games, introduced by Shapley  \cite{S53},  are an important class of stochastic optimality models and have been widely studied  due to  their rich applications  \cite{JN18-1,JN18-2}. Traditional  stochastic game theory  primarily focuses on   the expected  criteria \cite{GH11,GY08,  PL15,WC16}, where  all players are assumed to be risk-neutral.
However,  it is well known that  players may be risk-sensitive in  real-world applications \cite{FS00}.  Therefore,  risk-sensitive stochastic games,  in which the risk-sensitivity and reward of players are considered simultaneously through the exponential utility function  and   risk-sensitive  parameters, have garnered sustained research interest   \citep{BG14,BR17,CH19, GGPP23, BS23, CG25, BG18,GSPP21, HM21,W18}.

In  this paper, we focus on  zero-sum  discrete-time stochastic games with the risk-sensitive average  criterion, and thus   only describe the existing works on this aspect.
Basu  and Ghosh \cite{BG14} established the existence of  saddle points for  risk-sensitive average discrete-time stochastic games with countable states  and bounded costs. This result was extended to   the case of Borel state and action spaces by B{\"a}uerle  and Rieder  \cite{BR17}. 
For  risk-sensitive average discrete-time stochastic games with   unbounded costs,  Ghosh et al. \cite{GGPP23} employed the nonlinear Krein-Rutman theorem to  prove  the existence of  saddle points.   
 Cavazos-Cadena and Hern\'{a}ndez-Hern\'{a}ndez  \cite{CH19} showed that an appropriate normalization of   risk-sensitive discounted value  functions converges to the risk-sensitive average value function as the discount factor tends to $1$ in   discrete-time stochastic games.
 For   semi-Markov games, Bhabak and Saha  \cite{BS23} addressed   the  case of finite states and proved the existence of the value and   saddle points.
  Chen and Guo \cite{CG25} studied semi-Markov games with compact state and action  spaces, proved that the value and a saddle point  exist, and provided an algorithm to compute $\varepsilon$-approximation of the value.  
  
    As can be seen in  \cite{BG14,BR17,CH19,GGPP23,BS23,CG25},    for the  risk-sensitive average  games  the existence   of  saddle points has been established under various conditions, whereas their computation   has not been addressed even for the finite-state case. Since it is   desirable and important in practical applications to compute   saddle points, in this paper we will make an attempt on the computation. Precisely, we study a discrete-time risk-sensitive average stochastic game  with finite states and actions, and aim to provide algorithms for computing (or at least approximating)  the value and saddle points.

First,   we  introduce  the so-called irreducible coefficient,  give its efficient calculation from the data of the game, and establish the  equivalent relationship between the irreducibility condition and the irreducible coefficient (see Proposition \ref{prop-N}).  Second,  under the irreducibility condition  we construct  an increasing sequence and a decreasing one, whose limits are proved to equal the value. Based on these two sequences,  we propose an iteration algorithm and its stopping rule for computing $\varepsilon$-approximations of the value (see Algorithm \ref{algo1} and Theorem \ref{al-value}). Third, using    $\varepsilon$-approximations of the value and the irreducible coefficient, we further construct an approximate solution to the Shapley equation, and prove that any mini-max selector of the approximate solution is an $\varepsilon$-saddle point (see Theorem  \ref{thm-sd} and Proposition \ref{var-sd}).  Subsequently,    we present an algorithm for computing $\varepsilon$-saddle points, where the number of iteration steps is explicitly given (see Algorithm \ref{algo2}).  To the best of our knowledge, Algorithm  \ref{algo2}  developed in this paper is the first algorithm  to   compute    $\varepsilon$-saddle points of risk-sensitive average stochastic games. Finally, we give    an   example of energy management in smart grids  to illustrate the applications and effectiveness of our results.

The rest of this paper is organized as follows.  In Section 2, we describe the model of the stochastic game  and introduce the risk-sensitive average criterion.  Our main results on approximating the value and saddle points   are presented in Sections 3 and 4, respectively.  Finally, to  illustrate our results, we  give an     example   in Section 5.

 \section{The game model}	

A two-person zero-sum  risk-sensitive stochastic game   is  defined as  
	\begin{equation}\label{game}
G(\theta):=	\{ \theta, E, A, B, (A(i), B(i), i \in E), P(j|i, a, b), c(  i, a, b) \},
\end{equation}
 where $\theta\in (0,\infty)$ is a risk-sensitive parameter,  $E$ is the  state space, and $A$ and $B$ are the action  spaces  of players 1 and   2, respectively.  The spaces  $E$, $A$, and $B$  are assumed to be finite. For each $i\in E$, the nonempty set $A(i)\subset A$ (resp. $B(i)\subset B$) denotes the set of admissible actions of player 1 (resp. player 2) at state $i$.
 The stochastic kernel $P$ on $E$ given   $\mathbb{K}:=\{(i,a,b)| i\in E ,a\in A(i), b\in B(i)\}$ is the  transition function.
 Finally, the real-valued function $c$   on $  \mathbb{K}$ is the  cost/reward function to player 1/player 2. 
Since $E$, $A$, and $B$  are finite, $c$ is bounded. 
 Without loss of generality,     assume that   $c$ is nonnegative.

 Now we introduce the concept of a policy.  To do so, for each $n\in \mathbb{N}:=\{0,1,2,\ldots\}$, let $H_n$  denote the space  of admissible histories of the game up to the $n$th decision epoch, i.e.,  $H_0=E$,  $H_n=\mathbb{K}^n \times E (n\geq 1)$. 
\begin{defn}\lb{pol}
\begin{itemize}
	\item[\rm (a)]  A   (randomized history-dependent) policy  for player 1 is a sequence  $\pi = \{ \pi_n,  n \in \mathbb{N} \}$ of stochastic kernels $\pi_n$ on $A$ given $ H_n$  satisfying $\pi_n(A(i_n)|h_n)=1$ for all $n\in \mathbb{N}$ and $h_n=(i_0,a_0,b_0,\ldots,i_{n-1}, a_{n-1},b_{n-1},i_n)\in H_n$.

\item [\rm (b)] A  policy  $\pi = \{ \pi_n,  n \in \mathbb{N} \}$ for player 1 is called Markov if for each $n\in \mathbb{N}$ there exists a stochastic kernel   $\varphi_n$ on $A$ given $E$   such that $\pi_n(\cdot|h_n)=\varphi_n(\cdot|i_n)$ for all   $h_n=(i_0,a_0,b_0,\ldots,i_{n-1}, a_{n-1},b_{n-1},i_n)\in H_n$. We write such a policy as $\pi =\{\varphi_n,n\in \mathbb{N}\}$.
\item[\rm (c)]A Markov policy $\pi=\{\varphi_n,n\in \mathbb{N}\}$  for player 1 is said to be stationary if $\varphi_n$ are independent of $n$. In this case, we write $\pi$ as $\varphi$ for simplicity.
\item [\rm (d)] A stationary policy $\varphi$  for player 1 is called deterministic  stationary  if there is a map $f: E\to A$  satisfying $\varphi(\cdot|i)=\delta_{f(i)}(\cdot)$   for all $i\in E$, where $\delta_x$ denotes the Dirac measure at   $x$.  We denote such a deterministic  stationary policy   by $f$ for simplicity.
\end{itemize}
 \end{defn}
Denote by $\Pi_1$, $\Pi_1^m$, $\Pi_1^s$, and $\Pi_1^{sd}$   the sets of all  randomized history-dependent, Markov, stationary, and deterministic  stationary  policies   for player 1, respectively.
 Moreover, the sets $\Pi_2$, $\Pi_2^m$,  $\Pi_2^s$,   and $\Pi_2^{sd}$ of all  randomized history-dependent,  Markov,  stationary, and deterministic  stationary  policies, respectively,  for player 2 are defined similarly, with $B$ and $B(i)$ in lieu of $A$ and $A(i)$, respectively.

Given any initial state    $i \in E$, $\pi=\{\pi_n,n\in \mathbb{N}\}\in \Pi_1$,  and  $\sigma=\{\sigma_n,n\in \mathbb{N}\}\in \Pi_2$,
by   the Ionescu Tulcea theorem  \cite[Proposition C.10]{HL96},  there is a unique probability measure $\PP_{  i}^{\pi, \sigma}$ on $(( E \times A \times B  )^{\infty}, \mathcal{B}(( E \times A \times B  )^{\infty}))$  satisfying that 
\begin{align*}
&\mathbb{P}_i^{\pi,\sigma}(X_0=i)=1\\
&\mathbb{P}_i^{\pi,\sigma}(A_n=a,B_n=b|Y_n=h_n)=\pi_n(a|h_n)\sigma_n(b|h_n)  \quad \forall a\in A, b\in B,\\
&\mathbb{P}_i^{\pi,\sigma}(X_{n+1}=j|Y_n=h_n,A_n=a_n, B_n=b_n)=P(j|i_n,a_n,b_n)   \quad \forall j\in E,
\end{align*}
for all $n\in \mathbb{N}$, $h_n=(i_0,a_0,b_0,\ldots,i_{n-1}, a_{n-1},b_{n-1},i_n)\in H_n$, $a_n\in A(i_n)$, and $b_n\in B(i_n)$,  where for each $n\in \mathbb{N}$ and $\omega= (   i_0, a_0, b_0,  \ldots, i_n, a_n, b_n,   \ldots) \in ( E \times A \times B  )^{\infty}$,
\begin{align}\label{Yn}
	X_n (\omega) := i_n,  \   A_n (\omega) := a_n,  \   B_n (\omega) := b_n,\  Y_n(\omega)=(i_0,a_0,b_0,i_1,\ldots,a_{n-1},b_{n-1},i_n).  
\end{align} 
Let $\EE_{  i}^{\pi,\sigma}$ denote  the expectation operator with respect to $\PP_{ i}^{\pi,\sigma}$. Then, the   risk-sensitive    average cost for player 1  is defined by
\begin{align} \label{d-rs}
J(  i,\pi,\sigma):= \limsup_{n\to \infty}\frac{1}{\theta n}\ln\EE_{  i}^{\pi,\sigma} \lt[e^{\theta \sum_{k=0}^{n-1}  c( X_k,A_k,B_k)} \rt],
\quad i\in E,  (\pi,\sigma)\in \Pi_1\times \Pi_2. 
\end{align}
  The upper and lower value functions  are given by
\begin{align*}
 \overline{J}(i):=\inf_{\pi\in \Pi_1}\sup_{\sigma\in \Pi_2}J( i,\pi,\sigma), \quad\text{and}\quad \underline{J}(   i):=\sup_{\sigma\in \Pi_2}\inf_{\pi\in \Pi_1}J(i,\pi,\sigma), \quad i\in E,
\end{align*}
 respectively.
Moreover,   if $\underline{J}(i)=\overline{J}(i)$ for all $i\in E$,  the common function is called the value   of the  game  and is denoted by $J^*(i)$.	
\begin{defn}\label{defn-saddle}
Suppose that the value of the game exists and	fix any $\varepsilon\geq 0$.
\begin{itemize}
\item [\rm (a)]
 A policy $\pi^\varepsilon\in \Pi_1$ for player 1 is called $\varepsilon$-optimal   if
\begin{align*}
 J( i,\pi^\varepsilon,\sigma)\leq  {J}^*(i)+\varepsilon \quad \forall i\in E,\sigma\in \Pi_2.
\end{align*}
\item [\rm (b)] A policy $\sigma^\varepsilon\in \Pi_2$ for player 2 is called $\varepsilon$-optimal if
\begin{align*}
J( i,\pi,\sigma^\varepsilon) \geq   {J}^*(i)-\varepsilon\quad \forall i\in E, \pi \in \Pi_1.
\end{align*}
\item [\rm (c)] If  both $\pi^\varepsilon\in \Pi_1$ and $\sigma^\varepsilon\in \Pi_2$ are $\varepsilon$-optimal,   then the policy pair $(\pi^\varepsilon,\sigma^\varepsilon)$ is said to be an $\varepsilon$-saddle point. In particular, a $0$-saddle point is called a saddle point.
\item  [\rm (d)]A constant $r\in \mathbb{R}$ is called an  $\varepsilon$-approximation of the value if $\max_{i\in E}|J^*(i)-r|\leq \varepsilon$. 
 
\end{itemize}

\end{defn}

The aim of this paper is to develop   algorithms for computing (at least approximating) the value and  saddle points.

\section{On the computation of  $\varepsilon$-approximation of the value}

In this section,  we provide an algorithm  to  approximate   the value. 
To   this end,
we  recall the Shapley equation for the risk-sensitive average stochastic game, which is given by
\begin{align}\lb{SE}
	 \lambda h(i)=\inf_{\mu\in \mathcal{P}(A(i))}\sup_{\nu\in \mathcal{P}(B(i))} \sum_{a\in A(i)}\sum_{b\in B(i)}\sum_{j\in E}e^{\theta c(i,a,b)}h(j)P(j|i,a,b)\mu(a)\nu(b)   \ \forall  i\in E,
\end{align}
where  $\mathcal{P}(A(i))$  and  $\mathcal{P}(B(i))$  denote  the sets of  probability measures on $A(i)$ and $B(i)$, respectively,  $\lambda\in \mathbb{R}$,   $h$ is a function defined on $E$, and the pair $(\lambda,h)$ is called a solution to the Shapley equation (\ref{SE}).

Next, we  prove that the value exists by a solution to the Shapley equation. For convenience, we  introduce some  notation.   
 Let $\mathcal{M}$  be  the space  of all real-valued    functions $h$ on $E$ with the norm $\|h\|:=\max_{i\in E}|h(i)|$.
Note that $(\mathcal{M}, \|\cdot\|)$ is an ordered Banach space where $h\geq g$ means that $h(i)\geq g(i)$ for all $i\in E$. Denote
\begin{equation}\label{M}
	\mathcal{M}_+:=\{h\in \mathcal{M}| h(i)\geq  0, i\in E\}, \quad \mathcal{M}_+^o:=\{h\in \mathcal{M}| h(i)> 0, i\in E\}.
\end{equation}
Clearly,
$   \mathcal{M}_+^o \subset \mathcal{M}_+\subset \mathcal{M}$.
Now, we define an operator $\mathcal{L}$ on $\mathcal{M}_+$ as
\begin{equation}\label{L}
	\mathcal{L}h(i):=\inf_{\mu\in \mathcal{P}(A(i))}\sup_{\nu\in \mathcal{P}(B(i))}\mathcal{L}^{\mu,\nu}h(i), \quad h\in \mathcal{M}_+,i\in E,
\end{equation}
where
\begin{equation}\label{L-uv}
	\mathcal{L}^{\mu,\nu}h(i):=\sum_{a\in A(i)}\sum_{b\in B(i)}\sum_{j\in E}e^{\theta c(i,a,b)}h(j)P(j |i,a,b)\mu(a)\nu(b).
\end{equation}
It follows from (\ref{L}) and (\ref{L-uv}) that $\mathcal{L}$ maps $\mathcal{M}_+$ into  itself. Therefore, for each $n\geq 0$, we can define an operator $\mathcal{L}^n$ on $\mathcal{M}_+$ by
  $\mathcal{L}^0h=h$ and $\mathcal{L}^nh=\mathcal{L}(\mathcal{L}^{n-1}h)$ for each $h\in \mathcal{M}_+$.

\begin{defn}\label{max-min}
	Given any function $h\in\mathcal{M}_+$, a  stationary policy pair $(\varphi,\psi)\in \Pi_1^s\times \Pi_2^s$ is called a mini-max selector of $h$ if
	\begin{equation}\label{max-min-ineq}
		\mathcal{L}h(i)=\sup_{\nu\in \mathcal{P}(B(i))}\mathcal{L}^{\varphi(\cdot|i),\nu}h(i)=\inf_{\mu\in \mathcal{P}(A(i))}\mathcal{L}^{\mu,\psi(\cdot|i)}h(i)=\mathcal{L}^{\varphi(\cdot|i),\psi(\cdot|i)}h(i) \  \forall i\in E.
	\end{equation}
\end{defn} 
Since     $E$,  $A$, and $B$ are finite, the following result directly follow from    \cite[Section 2.4]{B13}.

\begin{lem}\label{lem-minimax}
 For any $i\in E$ and $h\in \mathcal{M}_+$, 
 the following linear program (Primal LP) 
 and its dual  program   (Dual LP) admit    optimal solutions,  denoted  by $(w_h(i),\varphi_h(\cdot|i))$ and $(v_h(i),\psi_h(\cdot|i))$, respectively. 
 \begin{align}\label{LP}
 	\begin{aligned}
 		& \textup{Primal LP:}\\
 		&\min_{w,\mu}  \, w   \\
 		&\mbox{s.t.} \left\{
 		\begin{array}{ll}
 			w\geq  \mathcal{L}^{\mu,\delta_{b}}h(i) &  \forall b\in B(i), \\
 			\sum_{a\in A(i)}\mu(a)=1,\\
 			\mu(a)\geq 0   &\forall a\in A(i);
 		\end{array}
 		\right.
 	\end{aligned}  \qquad  
	\begin{aligned}
			& \textup{Dual LP:}\\
 		&\max_{v,\nu}  \, v   \\
 		&\mbox{s.t.} \left\{
 		\begin{array}{ll}
 			v\leq  \mathcal{L}^{\delta_a,\nu}h(i) &  \forall a\in A(i), \\
 			\sum_{b\in B(i)}\nu(b)=1,\\
 			\nu(b)\geq 0   &\forall b\in B(i).
 		\end{array}
 		\right.
 	\end{aligned}
 \end{align}
Moreover,   $\mathcal{L}h(i)=w_h(i)=v_h(i)  $ for all $i\in E$ and $h\in \mathcal{M}_+$, and $(\varphi_h,\psi_h)$ is  a   mini-max selector of $h$.
\end{lem}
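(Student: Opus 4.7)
The plan is to recognize that for each fixed $i \in E$ and $h \in \mathcal{M}_+$, the quantity $\mathcal{L}^{\mu,\nu} h(i)$ is bilinear in $(\mu,\nu) \in \mathcal{P}(A(i)) \times \mathcal{P}(B(i))$, so the inner saddle-point problem defining $\mathcal{L}h(i)$ is a finite two-player zero-sum matrix game with payoff matrix
\[
M_{a,b} := e^{\theta c(i,a,b)} \sum_{j \in E} h(j) P(j|i,a,b), \quad a \in A(i),\; b \in B(i).
\]
The lemma is then essentially the von Neumann minimax theorem together with its standard linear-programming formulation, and the cited reference \cite[Section 2.4]{B13} packages exactly this fact.

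First I would observe that both the Primal LP and the Dual LP in \eqref{LP} are feasible (e.g.\ $\mu$ uniform on $A(i)$ with $w := \max_{a,b} M_{a,b}$, and $\nu$ uniform on $B(i)$ with $v := \min_{a,b} M_{a,b}$) and that their objectives are bounded on the feasible sets, since all entries $M_{a,b}$ are finite. Hence both programs attain optimal solutions $(w_h(i), \varphi_h(\cdot|i))$ and $(v_h(i), \psi_h(\cdot|i))$, and strong LP duality yields $w_h(i) = v_h(i)$. A routine bookkeeping check is to verify that the two LPs are mutual duals: writing the Primal LP in canonical form and computing its dual produces exactly the Dual LP, with $\nu(b)$ appearing as the dual multiplier attached to the constraint $w \geq \mathcal{L}^{\mu,\delta_b} h(i)$.

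Next I would identify this common optimal value with $\mathcal{L}h(i)$. Because $\mathcal{L}^{\mu,\nu} h(i)$ is linear in $\nu$, one has $\sup_{\nu \in \mathcal{P}(B(i))} \mathcal{L}^{\mu,\nu} h(i) = \max_{b \in B(i)} \mathcal{L}^{\mu,\delta_b} h(i)$, and combining this with the Primal LP gives
\[
\mathcal{L}h(i) = \inf_{\mu \in \mathcal{P}(A(i))} \max_{b \in B(i)} \mathcal{L}^{\mu,\delta_b} h(i) = w_h(i).
\]
A symmetric argument using linearity in $\mu$ and the Dual LP yields $v_h(i) = \sup_{\nu} \inf_{\mu} \mathcal{L}^{\mu,\nu} h(i)$, so altogether $\mathcal{L}h(i) = w_h(i) = v_h(i)$.

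Finally, to verify the mini-max selector property \eqref{max-min-ineq} I would exploit LP optimality directly. Primal feasibility of $(\varphi_h(\cdot|i), w_h(i))$ gives $\mathcal{L}^{\varphi_h(\cdot|i), \delta_b} h(i) \leq w_h(i)$ for every $b \in B(i)$, hence $\sup_{\nu} \mathcal{L}^{\varphi_h(\cdot|i), \nu} h(i) \leq w_h(i) = \mathcal{L}h(i)$; symmetrically, dual feasibility of $(\psi_h(\cdot|i), v_h(i))$ yields $\inf_{\mu} \mathcal{L}^{\mu, \psi_h(\cdot|i)} h(i) \geq v_h(i) = \mathcal{L}h(i)$. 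Sandwiching with the trivial chain $\inf_{\mu} \mathcal{L}^{\mu, \psi_h(\cdot|i)} h(i) \leq \mathcal{L}^{\varphi_h(\cdot|i), \psi_h(\cdot|i)} h(i) \leq \sup_{\nu} \mathcal{L}^{\varphi_h(\cdot|i), \nu} h(i)$ collapses all three quantities to $\mathcal{L}h(i)$, which is exactly \eqref{max-min-ineq}. There is no real technical obstacle here; the only mild nuisance is the bookkeeping needed to align the two explicit LPs through the standard duality pairing.
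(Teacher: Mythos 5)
Your proof is correct and is essentially the same argument the paper relies on: the paper simply notes that $E$, $A(i)$, $B(i)$ are finite and cites the standard LP formulation of finite zero-sum matrix games in \cite[Section 2.4]{B13}, which is exactly the strong-duality/minimax argument you spell out. Your additional verification of the mini-max selector property from primal and dual feasibility is the right (and standard) way to close the loop, so nothing is missing.
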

 
 To ensure the existence of the value and a saddle point,    we require  the  following irreducibility condition, which is   commonly used for   risk-sensitive average  game    \cite{BG14, CH19, GGPP23}. 
\begin{ass}[Irreducibility condition]\label{ir-ass}
	Given any   $(f,g)\in \Pi_1^{sd}\times \Pi_2^{sd}$,  the Markov chain	$\{X_n,n\in \mathbb{N} \}$ is  irreducible under the policy pair  $(f,g)$.
\end{ass}

 \begin{prop}\label{pro-value}
 	Under Assumption \ref{ir-ass},   the following statements hold.
 	\begin{itemize}
 			\item [\rm (a)]  The limit
 			 $
 						\lambda_{*}:=\lim\limits_{n\to \infty}\|\mathcal{L}^n {\bf 1}\|^{\frac{1}{n}}
 				$   exists, 	where ${\bf 1} $ represents the constant function one. 
 			\item [\rm (b)]  There exists a function $h^*\in \mathcal{M}_+^o  $ such that  $\lambda_* h^*=\mathcal{L}h^*$, i.e., the Shapley equation (\ref{SE}) has a solution $(\lambda_*, h^*)$.
 		      	\item [\rm (c)] The value of the game  exists and satisfies 
 			 	$ J^* (i)= {\theta}^{-1}{\ln \lambda_*}$  for all $i\in E.$    
 			Moreover, the game admits a stationary  saddle point $(\varphi^*,\psi^*)\in \Pi_1^s\times \Pi_2^s$.
 		\end{itemize}
 \end{prop}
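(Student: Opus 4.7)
\textbf{Proof plan for Proposition \ref{pro-value}.} I would treat the three parts sequentially, since each builds on the previous. The two structural properties of $\mathcal{L}$ on $\mathcal{M}_+$ that I will exploit throughout are monotonicity ($h \leq g$ implies $\mathcal{L}h \leq \mathcal{L}g$ pointwise) and positive 1-homogeneity ($\mathcal{L}(ch) = c\mathcal{L}h$ for $c \geq 0$). Both are immediate from the definition (\ref{L})--(\ref{L-uv}).

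For part (a), the plan is to establish the submultiplicative bound $\|\mathcal{L}^{n+m}\mathbf{1}\| \leq \|\mathcal{L}^n\mathbf{1}\| \cdot \|\mathcal{L}^m\mathbf{1}\|$. This follows by writing $\mathcal{L}^{n+m}\mathbf{1} = \mathcal{L}^n(\mathcal{L}^m\mathbf{1}) \leq \mathcal{L}^n(\|\mathcal{L}^m\mathbf{1}\|\mathbf{1}) = \|\mathcal{L}^m\mathbf{1}\|\,\mathcal{L}^n\mathbf{1}$ using monotonicity and homogeneity, then taking sup norms. Consequently $a_n := \ln\|\mathcal{L}^n\mathbf{1}\|$ is subadditive, and Fekete's lemma yields $a_n/n \to \inf_n a_n/n$, so $\lambda_* = \lim_n \|\mathcal{L}^n\mathbf{1}\|^{1/n}$ exists.

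Part (b) is the crux and I expect it to be the main obstacle, as it amounts to a nonlinear Perron--Frobenius theorem for $\mathcal{L}$. The plan is to apply a Brouwer-type fixed-point argument to the continuous map $T(h) := \mathcal{L}h/\|\mathcal{L}h\|$ on the simplex $\Delta := \{h \in \mathcal{M}_+ : \|h\| = 1\}$; note $\|\mathcal{L}h\| > 0$ on $\Delta$ because $e^{\theta c} > 0$ and $E, A, B$ are finite. A fixed point produces $(\lambda, h)$ with $\lambda h = \mathcal{L}h$ and $h \in \mathcal{M}_+$. To upgrade $h$ to $\mathcal{M}_+^o$, I would argue by contradiction: if $F := \{i : h(i) = 0\}$ were a nonempty proper subset of $E$, then the Shapley equation at $i \in F$, combined with the finiteness of $A(i), B(i)$ and the minimax structure (Lemma \ref{lem-minimax}), would yield a deterministic stationary pair $(f,g) \in \Pi_1^{sd}\times \Pi_2^{sd}$ under which $F$ is forward-invariant, contradicting Assumption \ref{ir-ass}. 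Finally, to identify the eigenvalue with $\lambda_*$, iterating $\mathcal{L}h = \lambda h$ gives $\mathcal{L}^n h = \lambda^n h$; sandwiching $\mathbf{1}$ between constant multiples of $h$ (possible since $h \in \mathcal{M}_+^o$ and $E$ is finite) then squeezes $\|\mathcal{L}^n \mathbf{1}\|^{1/n}$ to $\lambda$, matching part (a).

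For part (c), let $(\varphi^*,\psi^*)$ be a mini-max selector of $h^*$ from Lemma \ref{lem-minimax}. Using the Shapley equation and the tower property of conditional expectation under $\PP^{\varphi^*,\sigma}_i$, I would prove by induction on $n$ that
\begin{equation*}
\EE^{\varphi^*,\sigma}_i\!\left[e^{\theta \sum_{k=0}^{n-1} c(X_k,A_k,B_k)} h^*(X_n)\right] \leq \lambda_*^n h^*(i), \qquad \forall\, \sigma \in \Pi_2,\ i\in E,
\end{equation*}
where the inductive step uses $\mathcal{L}^{\varphi^*(\cdot|X_{n-1}),\sigma_{n-1}(\cdot|Y_{n-1})} h^*(X_{n-1}) \leq \mathcal{L}h^*(X_{n-1}) = \lambda_* h^*(X_{n-1})$. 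Since $0 < \min_i h^*(i) \leq \max_i h^*(i) < \infty$, dropping $h^*(X_n)$ on the left, taking $\frac{1}{\theta n}\ln$, and letting $n\to\infty$ in (\ref{d-rs}) gives $J(i,\varphi^*,\sigma) \leq \theta^{-1}\ln\lambda_*$ for all $\sigma \in \Pi_2$. The symmetric argument for $\psi^*$, reversing the inequalities via the dual part of Definition \ref{max-min}, yields $J(i,\pi,\psi^*) \geq \theta^{-1}\ln\lambda_*$ for all $\pi \in \Pi_1$. Combining these forces $\overline{J}(i) \leq \theta^{-1}\ln\lambda_* \leq \underline{J}(i)$, so the value exists, equals $\theta^{-1}\ln\lambda_*$, and $(\varphi^*,\psi^*)$ is a stationary saddle point.
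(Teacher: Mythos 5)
Your plan is correct in substance but follows a genuinely different route from the paper on all three parts. For (a), you use submultiplicativity of $\|\mathcal{L}^n\mathbf{1}\|$ plus Fekete's lemma, whereas the paper invokes a general result for monotone homogeneous operators from Ogiwara's work \cite{OT95} and then identifies $\sup_{\|h\|\le 1}\|\mathcal{L}^n h\|$ with $\|\mathcal{L}^n\mathbf{1}\|$; your argument is more elementary and self-contained. For (b), the paper applies the nonlinear Krein--Rutman theorem \cite[Proposition 3.1.5]{OT95} to the compact monotone homogeneous operator $\mathcal{L}$, which delivers an eigenvector \emph{with eigenvalue $\lambda_*$ directly}, and then upgrades positivity by showing the positive set $\{h^*>0\}$ is reachable from every state via the bound $h^*(i)\ge\lambda_*^{-n}\EE_i^{\varphi^*,\psi^*}[h^*(X_n)]$; you instead get an eigenpair from Brouwer, show the zero set would be absorbing under a deterministic selector (contradicting irreducibility), and then recover $\lambda=\lambda_*$ by sandwiching $\mathbf{1}$ between multiples of $h$ --- a clean extra step the paper does not need. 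For (c), the paper simply cites \cite[Theorem 1]{BS23}, while you give a direct verification argument via the multiplicative supermartingale-type inequality $\EE_i^{\varphi^*,\sigma}[e^{\theta\sum_{k<n}c}\,h^*(X_n)]\le\lambda_*^n h^*(i)$; this is essentially the computation the paper itself carries out later in the proof of Proposition \ref{var-sd}, so it is certainly within reach.

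Two technical points to repair before writing this up. First, the set $\{h\in\mathcal{M}_+:\|h\|=1\}$ with the sup norm is \emph{not} convex (e.g.\ the midpoint of $(1,0)$ and $(0,1)$ has norm $1/2$), so Brouwer does not apply to it as stated; normalize in the $\ell^1$ sense and work on $\{h\ge 0:\sum_i h(i)=1\}$ instead. Second, the claim $\|\mathcal{L}h\|>0$ on the simplex does not follow merely from $e^{\theta c}>0$ and finiteness: if $h=\delta_j$ and player~1 could steer away from $j$ from every state, $\mathcal{L}h$ would vanish identically. You need Assumption \ref{ir-ass} here too --- if $\|\mathcal{L}h\|=0$, selecting actions from the supports of the minimizing $\mu_i$'s produces a deterministic stationary pair under which the support of $h$ has no in-neighbor, contradicting irreducibility. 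Both fixes are routine and do not affect the overall architecture of your argument.
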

\begin{proof}
(a)  From (\ref{L}) and (\ref{L-uv}), we  derive    
\begin{align}\label{L-mono}
\mathcal{L}h_1\geq \mathcal{L}h_2 \quad  \text{and}\quad \mathcal{L}rh_1= r\mathcal{L}h_1  \quad \forall h_1, h_2\in \mathcal{M}_+, h_1\geq h_2, r\geq 0.
\end{align}
This,  by  \cite[Lemma 2.0.7]{OT95}, yields   that
the limit $\lim_{n\to \infty} (\sup_{h\in \mathcal{M}_+,\|h\|\leq 1}\|\mathcal{L}^n h\| )^{\frac{1}{n}}$ exists. Furthermore, using an induction argument,    we obtain that
$ \mathcal{L}^n h \leq \mathcal{L}^n {\bf 1}$ for all $n\in \mathbb{N}$   and $h\in \mathcal{M}_+$ with $\|h\|\leq 1$, which implies   $\sup_{h\in \mathcal{M}_+,\|h\|\leq 1}\|\mathcal{L}^n h\|=
\|\mathcal{L}^n {\bf 1}\|$. Therefore, the limit $\lim_{n\to \infty}\|\mathcal{L}^n {\bf 1}\|^{\frac{1}{n}}$ exists and 
\begin{align}\label{Ln}
	\lambda_{*}=\lim_{n\to \infty}\|\mathcal{L}^n {\bf 1}\|^{\frac{1}{n}}=\lim_{n\to \infty} (\sup_{h\in \mathcal{M}_+,\|h\|\leq 1}\|\mathcal{L}^n h\| )^{\frac{1}{n}}.
\end{align}
 (b) Since  $c$ is nonnegative,  we have $\mathcal{L}{\bf 1}\geq {\bf 1}$. Thus, by the monotonicity
of $\mathcal{L}$,      we obtain 
\begin{equation}\label{L1-ineq}
	\mathcal{L}^{n+1} {\bf 1} \geq \mathcal{L}^{n } {\bf 1}  \geq {\bf 1} \quad \forall n \in \mathbb{N}.
\end{equation}
This, together with  (\ref{Ln}),   implies  $ \lambda_{*} = \lim_{n\to \infty}\|\mathcal{L}^n {\bf 1}\|^{\frac{1}{n}}\geq 1.$ 
On the other hand,	by   (\ref{L-uv}) and  (\ref{L})   we derive  that
	\begin{equation}\label{L-1}
			\|\mathcal{L}u -\mathcal{L}v \| =\max_{i\in E}| \mathcal{L}u(i) -\mathcal{L}v(i) | \leq  e^{\theta ||c||}\|u-v\| \quad \forall u,v\in \mathcal{M}_+,
		\end{equation}
with $ ||c||:=\max_{(i,a,b)\in \mathbb{K}}c(i,a,b) $, which implies that   the set $\{\mathcal{L}u| u\in\mathcal{M}_+, \|u\|\leq r\}$ is bounded for any $r\geq 0$. Then, by   the finiteness of $E$, we have that  the set $\{\mathcal{L}u| u\in \mathcal{M}_+, \|u\|\leq r\}$ is relatively compact, which together with 	
	  (\ref{L-1}) implies that  $\mathcal{L}$ is  a compact operator.
	According to  the nonlinear Krein-Rutman theorem \cite[Proposition 3.1.5]{OT95}, 
	 the compactness of $\mathcal{L}$, (\ref{L-mono}), and the inequality $\lambda_*\geq 1$ guarantee that
	 there exists a function $h^*\in \mathcal{M}_+ $  satisfying $\lambda_{*}h^*=\mathcal{L}h^*$ and $||h^*||\neq 0$.
	
	Next, we prove   $h^*\in \mathcal{M}_+^o$. 
	By Lemma \ref{lem-minimax}, there exists  $(\varphi^*,\psi^*)\in \Pi_1^s\times \Pi_2^s$ such that  $\mathcal{L}h^*(i)=\mathcal{L}^{\varphi^*(\cdot|i),\psi^*(\cdot|i)}h^*(i)$ for all $i\in E$. This, together with the nonnegativity of $c$ and (\ref{L-uv}), yields that
$
\mathcal{L}h^*(i) \geq  \sum_{a\in A(i)}\sum_{b\in B(i)}\sum_{j\in E}h^*(j) P(j|i,a,b)\varphi^*(a|i) \psi^*(b|i)$  for all $i\in E$.
 Then, using the equality $\lambda_{*}h^*=\mathcal{L}h^*$,  we can prove by induction that
		\begin{equation}\label{L-ineq2}
				h ^* (i)\geq 	
			 \lambda_*^{-n}  \mathbb{E}^{\varphi^* ,\psi^* }_i[h^*(X_{n }) ] \quad \forall n\geq 0,i\in E.
			\end{equation}
 From $h^*\in \mathcal{M}_+ $ and $||h^*||\neq 0$, there is a state $j\in E$ with $h^*(j)>0$. 	On the other hand, under Assumption \ref{ir-ass}, for each $i\in E$,  there is a positive integer $N_i$ such that
		$\mathbb{P}^{\varphi^* ,\psi^*}_{{i} }(X_{N_i}=j)>0$. Therefore,   using   (\ref{L-ineq2}) we obtain that
	$$
			h^*(i )\geq    \lambda_*^{-N_i}     \mathbb{E}^{\varphi^* ,\psi^*}_{ {i} }[ h^*(X_{N_i})]\geq  \lambda_*^{-N_i}  h^* (j)   \mathbb{P}^{\varphi^* ,\psi^*}_{{i} }(X_{N_i}=j)>0 \quad \forall i\in E,
		$$
which implies $h^*\in \mathcal{M}_+^o$. 
		
	 (c)  The desired result  follows by combining \cite[Theorem 1]{BS23}  and part (b).
\end{proof}
Proposition \ref{pro-value} shows that  under the irreducibility condition the game  has the value and a saddle point. Next, we  introduce the irreducibility  coefficient  and establish    the relationship between  it and the irreducibility  condition, which is used to   propose an   algorithm for computing $\varepsilon$-approximations of the value.

\begin{defn}\label{defn-ir-coeff}
The	 irreducible coefficient is defined as
\begin{align}\label{gamma}
	\gamma:=\min_{(i,j)\in E^2} \inf_{\pi\in \Pi_1}\inf_{\sigma\in \Pi_2}\mathbb{P}_i^{\pi,\sigma}(\tau_j\leq |E|),
\end{align} 
where   $\tau_j:=\inf\{n>0 \mid X_n=j\}$ denotes the first return time  to  $j\in E $ with $\inf \emptyset:=+\infty$, and   $|E|$ is the cardinality of the state space $E$.
\end{defn}

\begin{prop}\label{prop-N}  
\begin{itemize}
	
\item [\rm (a)] For each $i,j\in E$, define  
\begin{align}\label{Vxy}
  	V_0^j(i):=1 \quad \text{and} \quad	V_{k+1}^j(i ):=\max_{a\in   A(i)}\max_{b \in   B(i)}\sum_{i_1\neq j}P(i_1 |i,a,b)V_k^j(i_1 ), \quad  k\in \mathbb{N}.
\end{align}
Then, the  irreducible coefficient $\gamma$ satisfies $\gamma=1-\max_{(i,j)\in E^2}V_{|E|}^j(i) $.
\item [\rm (b)] Assumption \ref{ir-ass} holds if and only if   $\gamma>0$.
\end{itemize}
\end{prop}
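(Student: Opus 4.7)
The plan is to treat part (a) as a finite-horizon dynamic-programming identification and part (b) as an invariant-set argument that converts a witness of $\gamma=0$ into a stationary policy pair violating irreducibility. The underlying observation throughout is that on an $|E|$-state chain, any reachability question is already decided by time $|E|$, which is why the horizon $|E|$ appears in both the definition of $\gamma$ and the recursion (\ref{Vxy}); moreover, since $E$, $A$, and $B$ are finite, all suprema and infima are attained by pure selectors.

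For (a), the key claim I will establish by induction on $k$ is
\[
V_k^j(i) \;=\; \sup_{\pi\in\Pi_1,\,\sigma\in\Pi_2}\mathbb{P}_i^{\pi,\sigma}(\tau_j>k), \qquad i,j\in E,\ k\in\mathbb{N}.
\]
The base $k=0$ is immediate since $\tau_j\ge 1$ by definition. For the inductive step, I would condition on the first action pair and the first transition, recognize the resulting continuation probability as $\mathbb{P}_{i_1}^{\pi',\sigma'}(\tau_j>k)$ for a shifted history-dependent policy $(\pi',\sigma')$ (rigorous by Ionescu-Tulcea), and invoke the induction hypothesis to push the inner sup over the continuation up to $V_k^j(i_1)$ for each branch $i_1$; the outer sup over $\pi_0,\sigma_0$ is then attained by Dirac masses at the maximizing $(a,b)$, reproducing the recursion in (\ref{Vxy}). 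Setting $k=|E|$ gives $1-V_{|E|}^j(i)=\inf_{\pi,\sigma}\mathbb{P}_i^{\pi,\sigma}(\tau_j\le|E|)$, and minimizing over $(i,j)\in E^2$ yields $\gamma=1-\max_{(i,j)\in E^2}V_{|E|}^j(i)$.

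For (b), the $(\Leftarrow)$ direction is immediate: if $\gamma>0$, then for every deterministic stationary pair $(f,g)\in\Pi_1^{sd}\times\Pi_2^{sd}$ and every $(i,j)\in E^2$ we have $\mathbb{P}_i^{f,g}(\tau_j\le|E|)\ge\gamma>0$, so $j$ is accessible from $i$ under $(f,g)$, hence Assumption \ref{ir-ass} holds. For $(\Rightarrow)$, I argue by contraposition. Assume $\gamma=0$; by (a) there exist $i^*,j^*\in E$ with $V_{|E|}^{j^*}(i^*)=1$, and set $S_k:=\{i\in E:V_k^{j^*}(i)=1\}$. First I would note that $V_k^{j^*}$ is nonincreasing in $k$ (push $V_1^{j^*}\le V_0^{j^*}={\bf 1}$ through the monotone recursion), so $S_0\supseteq S_1\supseteq\cdots$ is a nonincreasing chain of subsets of the $|E|$-element set $E$ and therefore stabilizes at some index $\le|E|$, in particular $S_{|E|}=S_{|E|+1}$. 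Next, the equality $V_{k+1}^{j^*}(i)=1$ combined with $V_k^{j^*}\le{\bf 1}$ forces the existence of $(a,b)\in A(i)\times B(i)$ with simultaneously $P(j^*|i,a,b)=0$ and $\{i_1:P(i_1|i,a,b)>0\}\subseteq S_k$. Applied to each $i\in S_{|E|}=S_{|E|+1}$, this produces selectors $(a^*(i),b^*(i))$; extending arbitrarily on $E\setminus S_{|E|}$ yields $(f,g)\in\Pi_1^{sd}\times\Pi_2^{sd}$ under which from any state in $S_{|E|}$ the chain stays in $S_{|E|}$ and has zero one-step probability of hitting $j^*$. Since $i^*\in S_{|E|}$, the chain under $(f,g)$ starting at $i^*$ never visits $j^*$ (and in the sub-case $i^*=j^*$ it immediately leaves and never returns), contradicting the irreducibility required by Assumption \ref{ir-ass}.

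The main obstacle I anticipate is the bookkeeping in the induction of (a): the shifted continuation policy after the first transition genuinely depends on the realized next state $i_1$, so one must allow a family of continuations indexed by $i_1$ rather than a single global policy, which is what Ionescu-Tulcea cleanly provides. A parallel subtlety in (b) is that the selectors $(a^*(i),b^*(i))$ must be chosen \emph{simultaneously} for every $i\in S_{|E|}$ with the successors-in-$S_{|E|}$ property, and this is exactly what the fixed-point identity $S_{|E|}=S_{|E|+1}$ buys us; without pushing the iteration one step past $|E|$, one would only obtain successors in the possibly-larger set $S_{|E|-1}$ and lose absorption of the constructed stationary chain.
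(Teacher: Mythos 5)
Your argument is correct, and while part (a) matches the paper's proof in substance, your forward direction of part (b) takes a genuinely different route. For (a), the paper proves the two inequalities separately --- a telescoping expansion showing $\mathbb{P}_i^{\pi,\sigma}(\tau_j>k)\le V_k^j(i)$ for arbitrary history-dependent policies, and an inductive construction of a Markov pair $(\pi^{j,k},\sigma^{j,k})$ attaining equality --- which is the same finite-horizon dynamic-programming content as your single sup-identification. For (b), the paper argues the implication ``irreducibility $\Rightarrow\gamma>0$'' directly: it builds an \emph{increasing} chain of sets $D_k$ (states from which $D_{k-1}\cup\{j\}$ is reached in one step with positive probability under every action pair), shows by contradiction that the chain stabilizes at $E$, then proves positivity of $\mathbb{P}_i^{\pi,\sigma}(\tau_j\le k)$ on $D_k$ for all Markov pairs and invokes the attainment from (a). You instead argue the contrapositive via the \emph{decreasing} level sets $S_k=\{i:V_k^{j^*}(i)=1\}$, extracting from the fixed point $S_{|E|}=S_{|E|+1}$ a deterministic stationary pair whose chain is trapped in $S_{|E|}\setminus\{j^*\}$. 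Your route is more economical: it reuses part (a) immediately and dispenses with the paper's Steps 2--3. The one point you should make explicit is the justification of $S_{|E|}=S_{|E|+1}$: a nonincreasing chain of subsets of an $|E|$-element set need not satisfy $S_{|E|}=S_{|E|+1}$ in general (the strict drops could be spread out), so you must use the fact --- which your own characterization of $S_{k+1}$ in terms of $S_k$ already provides --- that $S_{k+1}=\Phi(S_k)$ for a monotone set map $\Phi$, whence consecutive equality propagates forever and, since $i^*\in S_{|E|}\neq\emptyset$ forces a strict drop at most $|E|-1$ times, the fixed point is reached by index $|E|$.
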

\begin{proof}
(a)  Given any  $k\geq 1$,  $\pi=\{\pi_n, n\geq 0\}\in \Pi_1$,    $\sigma=\{\sigma_n,n\geq 0\}\in \Pi_2$,   and   $i,j\in E$, it follows from (\ref{Vxy}) that
\begin{align}
	&\mathbb{P}_i^{\pi,\sigma}(\tau_j\leq k)\notag\\
	=&1- \sum_{i_0\in E}\delta_{i}(i_0)\sum_{a_0\in A(i_0)}\sum_{b_0\in B(i_0)}\sum_{i_1\in E\setminus\{j\}}P(i_1|i_0,a_0,b_0)\pi_0(a_0|h_0)\sigma_0(b_0|h_0)\cdots\sum_{a_{k-1}\in A(i_{k-1})} \notag\\
	&\sum_{b_{k-1}\in B(i_{k-1})}\sum_{i_{k }\in E\setminus\{j\}} P(i_{k }|i_{k-1},a_{k-1},b_{k-1})\pi_{k-1}(a_{k-1}| h_{k-1})\sigma_{k-1}(b_{k-1}|h_{k-1})\notag\\
	\geq &1- \sum_{i_0\in E}\delta_{i}(i_0)\sum_{a_0\in A(i_0)}\sum_{b_0\in B(i_0)}\sum_{i_1\in E\setminus\{j\}}P(i_1|i_0,a_0,b_0)\pi_0(a_0|h_0)\sigma_0(b_0|h_0)\cdots\sum_{a_{k-2}\in A(i_{k-2})} \notag\\
	&\sum_{b_{k-2}\in B(i_{k-2})}\sum_{i_{k-1 }\in E\setminus\{j\}} V_1^j(i_{k-1})P(i_{k -1}|i_{k-2},a_{k-2},b_{k-2})\pi_{k-2}(a_{k-2}| h_{k-2})\sigma_{k-2}(b_{k-2}|h_{k-2}) \notag\\
	\vdots \notag\\
	\geq& 1 - \sum_{i_0\in E}\delta_{i}(i_0)\sum_{a_0\in A(i_0)}\sum_{b_0\in B(i_0)}\sum_{i_1\in E\setminus\{j\}}V_{k-1}^j(i_1)P(i_1|i_0,a_0,b_0)\pi_0(a_0|h_0)\sigma_0(b_0|h_0) \notag\\
	\geq& 1-V_{k }^j(i), \label{VK-eq2}
\end{align}
 where $h_0=i_0$ and $h_m=(h_{m-1},a_{m-1},b_{m-1},i_m)$ for all $m=1,2,\ldots,k$.  Hence,  
\begin{align}\label{VE-eq}
\inf_{\pi\in\Pi_1}\inf_{\sigma\in \Pi_2}\mathbb{P}_i^{\pi,\sigma}(\tau_j\leq  k) \geq 1-V_{ k}^j(i) \quad \forall i,j\in E.
\end{align}
Next, we claim that  for all $k\in \mathbb{N}$ and $j\in E$, there is $(\pi^{j,k},\sigma^{j,k})\in \Pi_1^m\times \Pi_2^m$ satisfying
\begin{equation}\label{VK-eq}
	 	V_{k}^j(i )=\mathbb{P}_i^{\pi^{j,k},\sigma^{j,k}}(\tau_j > k)\quad \forall i\in E.
\end{equation}
Fix any $j\in E$.  The proof  of the claim proceeds by induction on $k$. For $k=0$, 
 it follows from  $\{\tau_j> 0\}=(E\times A\times B)^\infty$ that  (\ref{VK-eq}) holds  with any Markov policy pair.   
Now assume  that   there exist $\pi^{j,k}=\{\varphi_0[k], \varphi_1[k],\ldots   \} \in \Pi_1^m$ and
$ \sigma^{j,k}= \{\psi_0[k], \psi_1[k],\ldots \}  \in  \Pi_2^m$ such that (\ref{VK-eq}) holds for some $k\geq 0$.
For each $i\in E$, by the finiteness of $A(i)$ and $B(i)$,   there exists  $(a^i,b^i)\in A(i)\times B(i)$ such that  $V_{k+1}^j(i )=\sum_{i_1\neq j}
P(i_1|i,a^i,b^i)V_{k}^j(i_1 ) $.
Then, we  construct $\pi^{j,k+1}:=\{\varphi_0[k+1],\ldots,\varphi_n[k+1],\ldots\}$ and $\sigma^{j,k+1}:=\{\psi_0[k+1],\ldots,\psi_n[k+1],\ldots\}$ as follows:   for each $i\in E$ and $n\geq 0$,
\begin{equation}\label{pi}
	\varphi_n[k+1](\cdot|i)=\begin{cases}
		\delta_{a^i}(\cdot)  \quad &n=0;\\
		\varphi_{n-1}[k](\cdot|i)\quad &n\geq 1;
	\end{cases}
	\,
	\psi_n[k+1](\cdot|i)=\begin{cases}
		\delta_{b^i}(\cdot)  \quad &n=0;\\
		\psi_{n-1}[k](\cdot|i)\quad &n\geq 1.
	\end{cases}
\end{equation}
The constructions of $\pi^{j,k+1}$ and $\sigma^{j,k+1}$ indicate that
\begin{align}\label{Vk-ineq1}
	&	\mathbb{P}^{\pi^{j,k+1},\sigma^{j,k+1}}_i(\tau_j>k+1)\notag\\ = &  \sum_{i_1 \neq j}P(i_1|i,a^i,b^i)\mathbb{P}^{\pi^{j,k},\sigma^{j,k}}_{i_1}(\tau_j>k)
	= \sum_{i_1 \neq j}P(i_1|i,a^i,b^i)V_k^j(i_1 )
	=  V_{k+1}^j(i )   \quad\forall i\in E,
\end{align}
where the first, second, and   last equalities follow from the Markov property, our induction hypothesis, and the definition of $(a^i,b^i)$, respectively. Clearly, (\ref{Vk-ineq1}) means that (\ref{VK-eq}) holds for integer $k+1$ and state $j$. This completes the inductive proof of the claim. Combining   (\ref{VE-eq}) and (\ref{VK-eq}) implies that for all $i,j\in E$
\begin{align}\label{V-mono}
	1-	V_{k}^j( i ) =\mathbb{P}_i^{\pi^{j,k},\sigma^{j,k}}(\tau_j \leq k) \geq \inf_{\pi\in \Pi_1}\inf_{\sigma\in \Pi_2}\mathbb{P}_i^{\pi  ,\sigma }(\tau_j \leq k) \geq 	1-	V_{k}^j( i ).
\end{align}
Setting $k=|E|$ in the above display gives that
$$	1-	V_{|E|}^j( i ) =\inf_{\pi\in \Pi_1}\inf_{\sigma\in \Pi_2}\mathbb{P}_i^{\pi  ,\sigma }(\tau_j \leq |E|) \quad \forall i,j\in E,
$$
which, together with the arbitrariness of $i,j $ and Definition \ref{defn-ir-coeff}, yields  the desired result. 

(b)  \underline{Assumption \ref{ir-ass}~$\Longrightarrow$~ $\gamma>0$:}
  Let $j\in E$ be an arbitrary state and  define  
	\begin{equation}\label{Dy}
	D_0:=\emptyset, \quad	D_{k+1}:=\big\{i\in E| \min_{a\in A(i), b\in B(i)}P(D_k\cup\{j\}|i,a,b)>0\big\},\quad k\in \mathbb{N}.
	\end{equation}
Since $D_0=\emptyset \subset D_1$, an induction argument shows that
$D_k\subset D_{k+1}$ for all $k\in \mathbb{N}$. Therefore, it follows from the finiteness of $E$  that   $k^*:=\inf\{k\in \mathbb{N}| D_k=D_{k+1}\} \leq|E|$.   Next, we shall prove $\gamma>0$ in steps.

{\bf Step 1:}	We  first  prove  $D_{k^*}=E$.

Suppose  by way of contradiction that   $   E \setminus D_{k^*} \neq \emptyset$.   Define
$$
(f^*(i),g^*(i)):= \argmin_{a\in A(i), b\in B(i)}P(D_{k^*}\cup\{j\}|i,a,b) \quad \forall i\in E.
$$
Using the definition  of $k^*$ and (\ref{Dy})  we have that
\begin{equation}\label{PD_k}
	\mathbb{P}_i^{f^*,g^*}(X_1\in D_{k^*}\cup\{j\})=\min_{a\in A(i), b\in B(i)}P(D_{k^*}\cup\{j\}|i,a,b)=0 \quad \forall i\in  E\setminus D_{k^*}.
\end{equation}
Suppose that   $\mathbb{P}_i^{f^*,g^*}(X_n\in D_{k^*}\cup\{j\})=0$  holds for some $n\geq 1$ and all $i\in  E\setminus D_{k^*}$,  then  the Markov property and (\ref{PD_k})  lead to  that for each $i\in  E\setminus D_{k^*}$
\begin{align*}
	 \mathbb{P}_i^{f^*,g^*}(X_{n+1}\in D_{k^*}\cup\{j\})
	=&\sum_{i_n\in  E}\mathbb{P}_i^{f^*,g^*}(X_n=i_n)\mathbb{P}_{i_n}^{f^*,g^*}(X_{ 1}\in D_{k^*}\cup\{j\})\\
	=&\sum_{i_n\notin  D_{k^*}\cup\{j\}}\mathbb{P}_i^{f^*,g^*}(X_n=i_n) \mathbb{P}_{i_n}^{f^*,g^*}(X_{ 1}\in D_{k^*}\cup\{j\})
	 =  0.
\end{align*}
Therefore, by induction  we obtain that
$ \mathbb{P}_i^{f^*,g^*}(X_n\in D_{k^*}\cup\{j\})=0$
 for all $n\geq 1$ and $i\in  E\setminus D_{k^*}$.
   Consequently, for any $i\in  E\setminus D_{k^*}$,  we have that $ \mathbb{P}^{f^*,g^*}_i(X_n=j)=0$ for all $n\geq 1$, which contradicts   Assumption \ref{ir-ass}.  Thus,   $D_{k^*}=E$.

{\bf Step 2:}		Next, we claim that for each $k\geq 1$
	\begin{equation}\label{P-tau}
		\mathbb{P}_i^{\pi,\sigma}(\tau_j\leq k)>0 \quad \forall i\in D_k, (\pi,\sigma)\in \Pi_1^m\times \Pi_2^m.
	\end{equation}
	For $k= 1$,  (\ref{P-tau})  directly follows from (\ref{Dy}) and $\{\tau_j\leq 1\}=\{X_1=j\}$. We now assume that (\ref{P-tau}) holds for some $k\geq 1$. Fix   $\pi=\{\varphi_0,\varphi_1,\ldots\}\in \Pi_1^m$,   $\sigma=\{\psi_0,\psi_1,\ldots\}\in \Pi_2^m$ and $i\in D_{k+1}$.
	We  consider the following case $1$ and  case $2$.
\begin{itemize}	
	\item []
	{\it Case 1:  $\mathbb{P}^{\pi,\sigma}_i(X_1=j)>0$.} Then, $\mathbb{P}_i^{\pi,\sigma}(\tau_j\leq k+1)\geq \mathbb{P}^{\pi,\sigma}_i(X_1=j)>0.$
\item[]	
 {\it Case 2:  $\mathbb{P}^{\pi,\sigma}_i(X_1=j)=0$.} Noting that $i\in D_{k+1}$, we get that
	$$\mathbb{P}^{\pi,\sigma}_i(X_1\in D_k)=\mathbb{P}^{\pi,\sigma}_i (X_1\in D_k\cup \{j\})\geq \min_{a\in A(i), b\in B(i)}P(D_k\cup \{j\}|i,a,b)>0.$$
Therefore, there is a state $i_1\in D_k\setminus\{j\}$ such that $\mathbb{P}^{\pi,\sigma}_i(X_1=i_1)>0$. Then,  the induction hypothesis  and the Markov property imply that
	$ 
	\mathbb{P}_i^{\pi,\sigma}(\tau_j\leq k+1)\geq \mathbb{P}_i^{\pi,\sigma}(X_1=i_1)\mathbb{P}_{i_1}^{\pi^1,\sigma^1}(\tau_j\leq k)
	>0,
	$ 
	where $\pi^1:=\{\varphi_{1},\varphi_2,\ldots \}$ and $  \sigma^1:=\{\psi_{1},\psi_2,\ldots  \}$.
 \end{itemize}
Thus, using the results for the two cases above, we see that (\ref{P-tau}) is also true  for $k+1$. Hence, by induction (\ref{P-tau}) holds for all $k\geq 1$.
	
{\bf Step 3:}  Noting that $D_{k^*}=E$ (see Step 1) and   $k^*\leq |E|$, we obtain   $D_{|E|}=E$.   Thus, by (\ref{P-tau}),  (\ref{VK-eq}), and (\ref{VK-eq2}),  there is a policy pair $(\pi ^{j,|E|},\sigma^{j,|E|})\in \Pi_1^m\times \Pi_2^m$ such that
\begin{align}\label{a-imply-b}
 \inf_{\pi\in \Pi_1}\inf_{\sigma\in \Pi_2}\mathbb{P}^{\pi,\sigma}_i(\tau_j\leq |E|)\ =\mathbb{P}^{\pi ^{j,|E|},\sigma^{j,|E|}}_i(\tau_j\leq |E|)>0 \quad \forall i\in E,
 \end{align}
which together with the finiteness of $E$ and (\ref{gamma}) yields  $\gamma>0$.

\underline{$\gamma>0$ ~$\Longrightarrow$~ Assumption \ref{ir-ass}}:  It follows from (\ref{gamma})   that
$$
 \mathbb{P}^{f,g}_i(\tau_j\leq |E|)\geq \gamma >0 \quad \forall (f,g)\in \Pi_1^{sd}\times \Pi_2^{sd}, i,j\in E.
$$
  Therefore,  for any $(f,g)\in \Pi_1^{sd}\times \Pi_2^{sd}$ and $i,j\in E$, by $\gamma>0$ there exists a positive integer $k\leq |E|$  such that 	$ 
  \mathbb{P}^{f,g}_i(X_k=j) > \gamma/|E|>  0,$  which    implies Assumption \ref{ir-ass}.
\end{proof}

\begin{rem}\label{rem-ass}
   By  iteratively calculating  $V^j_{|E|}$  via (\ref{Vxy}), Proposition \ref{prop-N}  provides a novel method for the verification of  the irreducibility condition.  Specifically, if $\max\limits_{i,j\in E}V^j_{|E|}(i)<1$, then  we see that  the irreducibility condition is satisfied; Otherwise, it is not.

\end{rem}

 Now, we present the main result of this section, which is that under Assumption \ref{ir-ass}, an
$\varepsilon$-approximation of the value   is computed  for any error  $\varepsilon>0$.

 \begin{thm}\label{al-value}
 	 For each $n\in \mathbb{N}$, define
\begin{equation}\label{lambda_n}
\lambda_n:= (\max_{i\in E}\mathcal{L}^{2^n}{\bf 1}(i))^{\frac{1}{2^n}}, \quad
\zeta_n:=(\min_{i\in E}\mathcal{L}^{2^n}{\bf 1}(i))^{\frac{1}{2^n}}.
\end{equation}
 Under Assumption \ref{ir-ass}, the following statements are valid.
 \begin{itemize}
 \item [\rm (a)] The sequence $\{\theta^{-1}\ln\lambda_n, n\in \mathbb{N}\}$ is decreasing in $n$ and $\rho^{*}= \lim_{n\to\infty} \theta^{-1}\ln\lambda_n$, where $\rho^*$ is  the value of the game.
 \item [\rm (b)] The sequence $\{\theta^{-1}\ln\zeta_n, n\in \mathbb{N}\}$ is increasing in $n$ and $\rho^*=\lim_{n\to \infty} \theta^{-1}\ln\zeta_n$.
 \item [\rm (c)] Given any $\varepsilon>0$, there exists a positive integer $n_{\varepsilon}$ such that $  {\lambda_{n_{\varepsilon}}}/{\zeta_{n_{\varepsilon}}}  \leq  e^ {\theta\varepsilon }$. Therefore,
 $ 0\leq \theta^{-1}\ln \lambda_{n_{\varepsilon}}-\rho^* \leq \varepsilon$, and thus    $ \theta^{-1}\ln \lambda_{n_{\varepsilon}}$ is an $\varepsilon$-approximation of the value.
 \end{itemize}
 \end{thm}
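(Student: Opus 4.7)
The plan rests on two structural properties of $\mathcal{L}$---monotonicity and positive homogeneity of degree one, both recorded in \Ref{L-mono}---together with the strictly positive Krein-Rutman eigenfunction $h^* \in \mathcal{M}_+^o$ supplied by Proposition \ref{pro-value}(b). The doubling indexing $2^n$ is what turns $\mathcal{L}^{2^n}\mathbf{1}$ into a self-composable object that couples the two properties cleanly.

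For the monotonicity in part (a), I would start from the pointwise bound $\mathcal{L}^{2^n}\mathbf{1} \le \lambda_n^{2^n}\mathbf{1}$, apply $\mathcal{L}^{2^n}$, and use \Ref{L-mono} twice to obtain
\begin{equation*}
\mathcal{L}^{2^{n+1}}\mathbf{1} = \mathcal{L}^{2^n}\!\left(\mathcal{L}^{2^n}\mathbf{1}\right) \le \lambda_n^{2^n}\mathcal{L}^{2^n}\mathbf{1} \le \lambda_n^{2^{n+1}}\mathbf{1};
\end{equation*}
taking the maximum and the $2^{n+1}$-th root yields $\lambda_{n+1}\le\lambda_n$. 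An entirely symmetric argument, starting from $\mathcal{L}^{2^n}\mathbf{1} \ge \zeta_n^{2^n}\mathbf{1}$, gives the monotonicity of $\zeta_n$ in part (b). For the convergence, set $\alpha := \min_{i\in E} h^*(i) > 0$ and $\beta := \max_{i\in E} h^*(i)$; then $\alpha\mathbf{1}\le h^*\le\beta\mathbf{1}$, so monotonicity and homogeneity give, for every $k\ge 0$,
\begin{equation*}
\alpha\,\mathcal{L}^k\mathbf{1} \le \mathcal{L}^k h^* = \lambda_*^k h^* \le \beta\,\mathcal{L}^k\mathbf{1},
\end{equation*}
i.e.\ $(\alpha/\beta)\lambda_*^k \le \mathcal{L}^k\mathbf{1}(i) \le (\beta/\alpha)\lambda_*^k$ uniformly in $i$. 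Specializing $k = 2^n$ and extracting the $2^n$-th root sandwiches both sequences by $(\alpha/\beta)^{\pm 1/2^n}\lambda_*$, forcing $\lambda_n \to \lambda_*$ and $\zeta_n \to \lambda_*$. Since $\rho^* = \theta^{-1}\ln\lambda_*$ by Proposition \ref{pro-value}(c), parts (a) and (b) are complete.

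Part (c) is then essentially bookkeeping. The sandwich above yields the uniform ratio bound $\lambda_n/\zeta_n \le (\beta/\alpha)^{2/2^n}$, which tends to $1$; hence for any $\varepsilon>0$ there exists $n_\varepsilon$ with $\lambda_{n_\varepsilon}/\zeta_{n_\varepsilon}\le e^{\theta\varepsilon}$. Combining this with $\zeta_n\le\lambda_*\le\lambda_n$ (immediate from the monotone convergence in (a) and (b)) gives
\begin{equation*}
0 \le \theta^{-1}\ln\lambda_{n_\varepsilon} - \rho^* \le \theta^{-1}\ln\!\left(\lambda_{n_\varepsilon}/\zeta_{n_\varepsilon}\right) \le \varepsilon,
\end{equation*}
so $\theta^{-1}\ln\lambda_{n_\varepsilon}$ is an $\varepsilon$-approximation of $\rho^*$. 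The only genuine subtlety is the convergence of $\zeta_n$ in part (b): the looser information $\|\mathcal{L}^n\mathbf{1}\|^{1/n}\to\lambda_*$ controls the maximum but not the minimum, so the strict positivity of $h^*$---which is exactly where Assumption \ref{ir-ass} enters via Proposition \ref{pro-value}(b)---cannot be avoided.
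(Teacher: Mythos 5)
Your proof is correct, and while the monotonicity arguments for $\lambda_n$ and $\zeta_n$ coincide with the paper's (both rest on the submultiplicative bound obtained from monotonicity and homogeneity of $\mathcal{L}$), your convergence argument is genuinely different and notably cleaner. The paper proves $\lim_n\zeta_n=\lambda_*$ by contradiction: assuming $\zeta_*<\lambda_*$, it constructs an explicit Markov policy pair realizing the iterates $\mathcal{L}^{k}\mathbf{1}$, invokes the irreducibility coefficient $\gamma>0$ from Proposition \ref{prop-N} to transport the large value $\mathcal{L}^{N_m}\mathbf{1}(j^*)$ at the maximizing state $j^*$ to every other state within $|E|$ steps at multiplicative cost $\gamma/|E|$, and derives $\zeta_m>\zeta_*$. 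You instead route everything through the strictly positive eigenfunction $h^*$ of Proposition \ref{pro-value}(b): the two-sided comparison $\alpha\mathbf{1}\le h^*\le\beta\mathbf{1}$ combined with $\mathcal{L}^k h^*=\lambda_*^k h^*$ yields the sandwich $(\alpha/\beta)^{1/2^n}\lambda_*\le\zeta_n\le\lambda_n\le(\beta/\alpha)^{1/2^n}\lambda_*$, which handles (a), (b), and (c) in one stroke and even gives an explicit geometric rate for $\lambda_n/\zeta_n\to 1$. The trade-off is that your rate depends on $\alpha/\beta=\min h^*/\max h^*$, which is not computable from the data of the game (one only knows $h^*$ exists), whereas the paper's contradiction argument stays within quantities ($\gamma$, $|E|$, $\zeta_*$) tied to the computable irreducibility coefficient --- consistent with the paper's overall algorithmic emphasis, even though the stopping rule in Algorithm \ref{algo1} ultimately needs neither rate. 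Both approaches use Assumption \ref{ir-ass} in an essential way, yours indirectly through the strict positivity of $h^*$ and the paper's directly through $\gamma>0$; your closing remark correctly identifies that this positivity is exactly the point where the minimum, as opposed to the maximum, of $\mathcal{L}^{2^n}\mathbf{1}$ gets controlled.
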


\begin{proof}
(a) From (\ref{L-mono}), for each $n\in \mathbb{N}$, we obtain    
\begin{align}\label{L-lam1}
 \max_{i\in E}\mathcal{L}^{2^{n+1}}{\bf 1} (i) \leq (\max_{j\in E}\mathcal{L}^{2^n}{\bf 1}(j))\max_{i\in E}\mathcal{L}^{2^n}{\bf 1}(i)=  (\max_{i\in E}\mathcal{L}^{2^{n }}{\bf 1}(i))^2.
\end{align}
 Combining (\ref{L-lam1}) and (\ref{lambda_n})  gives  that
$
 \lambda_{n+1}\leq (\max_{i\in E}\mathcal{L}^{2^{n }}{\bf 1}(i))^{\frac{2}{2^{n+1}}}= \lambda_n  $ for all $n\in \mathbb{N}$. Therefore, the sequence $\{\theta^{-1}\ln\lambda_n, n\in \mathbb{N}\}$ is decreasing in $n$. 
Moreover,  (\ref{Ln}) and the definition of $\lambda_n$ give   $\lambda_{*}=\lim\limits_{n\to\infty}\lambda_n$. Therefore,  by $\rho^*=\theta^{-1}\ln \lambda_*$, we get (a).

(b) Using a similar argument as (\ref{L-lam1}), we have
\begin{equation}\label{zetan}
\zeta_{n+1}=(\min_{i\in E}\mathcal{L}^{2^{n+1}}{\bf 1}(i))^{\frac{1}{2^{n+1}}}\geq ((\min_{i\in E}\mathcal{L}^{2^{n}}{\bf 1}(i))^2)^{\frac{1}{2^{n+1}}}=\zeta_n  \quad \forall n\in \mathbb{N}.
\end{equation}
Thus, the sequence $\{\theta^{-1}\ln\zeta_n, n\in \mathbb{N}\}$ is increasing, and the limit $\zeta_*:=\lim_{n\to\infty}\zeta_n $ exists. Moreover,  (\ref{L1-ineq}) gives  $\zeta_*>0$.  Observing that  $\zeta_n \leq \lambda_n$, by part (a) we have $\zeta_*\leq \lambda_{*}$.
Hence, to get   (b),   it suffices to show  $\zeta_*\geq{\lambda_{*}}$.  Suppose,     by contradiction,  that $\zeta_*<{\lambda_{*}}$.  Then, by  Proposition \ref{pro-value} (a),
 there is a positive integer $N$  satisfying
\begin{equation}\label{gamma-1}
  \|\mathcal{L}^k {\bf 1}\|^{\frac{1}{k}} \geq \lambda_*-\frac{\lambda_*-\zeta_*}{2}= \frac{\lambda_*+\zeta_*}{2\zeta_*}\zeta_* \quad \forall k\geq N.
\end{equation}
Since   $   \frac{\lambda_*+\zeta_*}{2\zeta_*}>1$ and  $ \gamma>0$ (by Proposition \ref{prop-N} (b)),  there exists  $m\in \mathbb{N} $  satisfying
\begin{equation*}
N_m:= 2^{m }-|E|\geq N \quad \text{and}  \quad    \Big(\frac{\lambda_*+\zeta_*}{2\zeta_*}\Big)^{  N_m}\geq 2  \gamma^{-1} |E|   \zeta_*^{|E|}.
\end{equation*}
Let $j^*:=\arg\max_{j\in E}\mathcal{L}^{  N_m}{\bf 1}(j)$. Then,  the above display and (\ref{gamma-1}) imply that
\begin{equation}\label{gamma2}
\mathcal{L}^{  N_m}{\bf 1}(j^*)=\|\mathcal{L}^{N_m}{\bf 1}\|\geq  \Big(\frac{\lambda_*+\zeta_*}{2\zeta_*}\Big)^{N_m}\zeta_*^{N_m}   \geq 2\gamma^{-1} |E|     \zeta_*^{2^m}.
\end{equation}
For each  $n= 1,2,\ldots,2^m $, by Lemma \ref{lem-minimax}, there exists  $(\varphi_{n-1},\psi_{n-1})\in \Pi_1^s\times \Pi_2^s$ such that
\begin{align}\lb{n-varphi-psi}
	\mathcal{L}^{\varphi_{n-1}(\cdot|i),\psi_{n-1}(\cdot|i)}\big(\mathcal{L}^{2^{m }-n }{\bf 1}\big)(i)=\mathcal{L}^{2^{m }-n+1 }{\bf 1}(i) \quad \forall i\in E.
\end{align}
Define $ \widehat{\pi}:=\{\widehat{\varphi}_n,n\in \mathbb{N}\}$ and  $\widehat{\sigma}:=\{\widehat{\psi}_n, n\in \mathbb{N}\}$ as
\begin{equation}\label{hat-pi}
\widehat{\varphi}_n:=\begin{cases}
\varphi_n, \quad   n=0,1,\ldots,2^{m }-1,\\
\varphi_{0}, \quad n> 2^{m }-1;
\end{cases}
\quad
\widehat{\psi}_n:=\begin{cases}
	\psi_n, \quad   n =0,1,\ldots,2^{m }-1,\\
	\psi_{0}, \quad n> 2^{m }-1.
\end{cases}
\end{equation}
Then, employing   (\ref{hat-pi}) and  (\ref{n-varphi-psi}), we deduce that
\begin{equation}\label{L-eq1}
\mathcal{L}^{2^{m }}{\bf 1}(i)=\mathbb{E}^{\widehat{\pi},\widehat{\sigma}}_i\big[e^{\theta \sum_{k=0}^{n-1} c(X_k,A_k,B_k)}\mathcal{L}^{2^{m }-n }{\bf 1}(X_{n })  \big] \quad  \forall i\in E, n= 1,2,\ldots,2^{m }.
\end{equation}
Given any $i\in E$, since    Proposition \ref{prop-N}  shows  $\mathbb{P}_i^{\widehat{\pi},\widehat{\sigma}}(\tau_{j^*}\leq |E|)\geq \gamma>0$, there is an integer $1\leq k_i  \leq |E|$ such that $\mathbb{P}_i^{\widehat{\pi},\widehat{\sigma}}(X_{k_i} =j^*)\geq  {\gamma }/{|E|}$, which, together with (\ref{L-eq1}) and  the nonnegativity of $c$,  implies that
\begin{align*}
\mathcal{L}^{2^{m }}{\bf 1}(i)\geq \mathbb{E}^{\widehat{\pi},\widehat{\sigma}}_i\big[  \mathcal{L}^{2^{m }-k_i  }{\bf 1}(X_{k_i })   \big]
 \geq    \mathbb{P}_i^{\widehat{\pi},\widehat{\sigma}}(X_{k_i }=j^*) \mathcal{L}^{2^{m }-k_i  }{\bf 1}(j^*)    \geq    { \gamma\mathcal{L}^{2^{m }-k_i  }{\bf 1}(j^*)}/{|E|}.
\end{align*}
Noting that $1\leq k_i  \leq |E|$ for all $i\in E$, by (\ref{L1-ineq}),  (\ref{gamma2}),  and the display above we get that
$$
\mathcal{L}^{2^{m }}{\bf 1}(i)\geq \gamma\mathcal{L}^{2^{m }-k_i  }{\bf 1}(j^*)/|E| \geq  \gamma\mathcal{L}^{2^{m }-|E|  }{\bf 1}(j^*)/|E| \geq 2\zeta_*^{2^m} \quad \forall i\in E,
$$
 which, together with  $\zeta_* >0$ and (\ref{lambda_n}),   gives
$
	\zeta_m=(\min\limits_{i\in E}\mathcal{L}^{2^{m }}{\bf 1}(i))^{\frac{1}{2^m}}\geq 2^{\frac{1}{2^m}}  \zeta_*> \zeta_*.
$
This contradicts    (\ref{zetan}). Thus,  $\zeta_*\geq{\lambda_{*}}$.

(c) Part (c) directly follows from Theorem \ref{pro-value}, parts (a) and (b).
\end{proof}

Based on Theorem \ref{al-value} and Lemma \ref{lem-minimax},   we propose an iteration algorithm  to compute $\varepsilon$-approximations of the value.

\begin{algorithm}
	\caption{(An algorithm of $\varepsilon$-approximations of the value.)}
	\label{algo1}
	\begin{algorithmic}
\STATE {\bf Input: } The data of the game and an accuracy $\varepsilon>0$.
\STATE {\bf Step 1}~(Initialization):  Set $n=1$ and $h_{0}= \mathcal{L}{\bf 1}$.
\STATE {\bf Step 2}~(Iteration):  Compute $h_n=\mathcal{L}^{2^{n-1}}h_{n-1}$ via (\ref{LP}), and set
 $$\zeta_n=(\min\limits_{i\in E}h_n(i))^{\frac{1}{2^n}}, \quad    \quad \quad \lambda_n=(\max\limits_{i\in E}h_n(i))^{\frac{1}{2^n}}.$$
\STATE {\bf Step 3} (Accuracy control):  If $ \frac{\lambda_n}{\zeta_n}  \leq  e^ {\theta\varepsilon } $, go to Step 4. Otherwise, go to Step 2 by replacing $n$ with $n + 1$.
 \STATE {\bf Step 4} (Approximate the value): Let  $\widetilde{\rho} =\theta^{-1}\ln \lambda_n$,  thus $ 0\leq \widetilde{\rho} -\rho^* \leq \varepsilon$.
  \STATE {\bf Output:} An $\varepsilon$-approximation   $\widetilde{\rho} $ of the value.
	\end{algorithmic}
\end{algorithm}

\section{On the computation of  $\varepsilon$-saddle points}

In this section, we aim to  compute $\varepsilon$-saddle points. 
To this end, we need the following auxiliary proposition.
\begin{prop}\label{var-sd}
Suppose that Assumption \ref{ir-ass} holds.   Given any $\varepsilon \geq 0$, if a function $h$ on $E$ satisfies the condition $h\leq h^*\leq e^{\theta \varepsilon} h$ with $h^*$ as in Proposition \ref{pro-value} (b), then each  mini-max selector $(\varphi_h,\psi_h)$ of   $h$   is an $\varepsilon$-saddle point.
\end{prop}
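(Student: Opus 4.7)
The plan is to turn the pointwise sandwich $h\le h^{*}\le e^{\theta\varepsilon}h$ and the one-step mini-max property of $(\varphi_h,\psi_h)$ into two-sided multiplicative comparisons between $\mathcal{L}h$ and $h$ itself, then iterate these bounds along the trajectory to control the risk-sensitive expectation. Before doing that I would first record that $h\in\mathcal{M}_+^o$ (it is bounded below by $e^{-\theta\varepsilon}h^{*}>0$), so that $h_{\min}:=\min_{i\in E}h(i)>0$ and $\|h\|>0$; this makes every constant divisor appearing later harmless when passing to the $\limsup$.

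The upper bound for player 1 proceeds as follows. For an arbitrary $\sigma=\{\sigma_n\}\in\Pi_2$, I would condition on $Y_{n-1}$ and compute, with $X_{n-1}=j$,
\begin{align*}
\mathbb{E}_i^{\varphi_h,\sigma}\!\left[e^{\theta c(X_{n-1},A_{n-1},B_{n-1})}h(X_n)\,\big|\,Y_{n-1}\right]
=\mathcal{L}^{\varphi_h(\cdot|j),\sigma_{n-1}(\cdot|Y_{n-1})}h(j)
\le \mathcal{L}h(j),
\end{align*}
using the first equality in \eqref{max-min-ineq}. The sandwich combined with the Shapley equation gives $\mathcal{L}h\le \mathcal{L}h^{*}=\lambda_{*}h^{*}\le \lambda_{*}e^{\theta\varepsilon}h$, so the process $Z_n:=e^{\theta\sum_{k=0}^{n-1}c(X_k,A_k,B_k)}h(X_n)$ satisfies the one-step inequality $\mathbb{E}^{\varphi_h,\sigma}_i[Z_n\mid Y_{n-1}]\le \lambda_{*}e^{\theta\varepsilon}Z_{n-1}$. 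Iterating yields $\mathbb{E}^{\varphi_h,\sigma}_i[Z_n]\le (\lambda_{*}e^{\theta\varepsilon})^n h(i)$, hence
\begin{equation*}
\mathbb{E}^{\varphi_h,\sigma}_i\!\left[e^{\theta\sum_{k=0}^{n-1}c(X_k,A_k,B_k)}\right]\le \frac{(\lambda_{*}e^{\theta\varepsilon})^n h(i)}{h_{\min}}.
\end{equation*}
Taking $\theta^{-1}n^{-1}\ln(\cdot)$, using $J^{*}(i)=\theta^{-1}\ln\lambda_{*}$ from Proposition \ref{pro-value}(c), and passing to $\limsup$ gives $J(i,\varphi_h,\sigma)\le J^{*}(i)+\varepsilon$, uniformly in $\sigma$ and $i$.

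The lower bound for player 2 is symmetric. For any $\pi\in\Pi_1$, the second equality in \eqref{max-min-ineq} yields $\mathcal{L}^{\pi_{n-1}(\cdot|Y_{n-1}),\psi_h(\cdot|X_{n-1})}h(X_{n-1})\ge \mathcal{L}h(X_{n-1})$, while the sandwich and monotonicity \eqref{L-mono} give $\mathcal{L}h\ge e^{-\theta\varepsilon}\mathcal{L}h^{*}=e^{-\theta\varepsilon}\lambda_{*}h^{*}\ge e^{-\theta\varepsilon}\lambda_{*}h$. Iterating the resulting reverse one-step bound for $Z_n$ and dividing by the upper bound $\|h\|$ on $h(X_n)$ produces $J(i,\pi,\psi_h)\ge J^{*}(i)-\varepsilon$. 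Combining the two inequalities makes $(\varphi_h,\psi_h)$ an $\varepsilon$-saddle point in the sense of Definition \ref{defn-saddle}(c).

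The only delicate point I anticipate is handling general history-dependent opponents: I must make sure the conditioning on $Y_{n-1}$ collapses the opponent's kernel $\sigma_{n-1}(\cdot|Y_{n-1})$ (respectively $\pi_{n-1}(\cdot|Y_{n-1})$) into a single probability measure on $B(X_{n-1})$ (resp.\ $A(X_{n-1})$) so that the one-step $\sup_\nu$ (resp.\ $\inf_\mu$) in the mini-max selector definition can be applied pointwise in the state. Once that measurability/conditioning step is written out carefully, the iteration and the $\limsup$ step are routine.
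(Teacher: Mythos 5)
Your proposal is correct and follows essentially the same route as the paper: both arguments turn the sandwich $h\le h^*\le e^{\theta\varepsilon}h$ and the mini-max property into a one-step multiplicative drift inequality for $Z_n=e^{\theta\sum_{k=0}^{n-1}c(X_k,A_k,B_k)}\cdot(\text{test function})(X_n)$ by conditioning on $Y_{n-1}$, iterate it, divide by the (positive) minimum of the test function, and pass to the $\limsup$ of $\frac{1}{\theta n}\ln(\cdot)$. The only cosmetic difference is that the paper carries out the iteration with $h^*$ itself (using $h$ only in the intermediate comparison $h^*\le e^{\theta(\varepsilon-\rho^*)}\mathcal{L}^{\mu,\psi_h}h^*$), whereas you iterate directly with $h$; both are valid since each lies in $\mathcal{M}_+^o$.
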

\begin{proof}
 	By Proposition \ref{pro-value}, we have    $h^*=e^{-\theta \rho^*} \mathcal{L}h^*$. This, together with $h\leq h^*\leq e^{\theta \varepsilon} h$, (\ref{L-mono}),  and    Definition  \ref{max-min},  implies 
		\begin{equation}\label{L-ineq1}
				h^*(i) \leq   {e^{ \theta (\varepsilon-\rho^*)}}   \mathcal{L}h (i)  \leq  {e^{ \theta (\varepsilon-\rho^*)}}  \mathcal{L}^{\mu,\psi_h(\cdot|i)}h (i) \leq  {e^{ \theta (\varepsilon-\rho^*)}}   \mathcal{L}^{\mu,\psi_h(\cdot|i)} h^* (i),
			\end{equation}
		for all $ i\in E$ and  $\mu\in  \mathscr{P}(A(i))$. Fix any   $\pi=\{\pi_0,\pi_1,\ldots\}\in \Pi_1$. Then by  (\ref{L-uv}) and (\ref{L-ineq1}) we have that for each $i\in E$, $n\geq 0$, and $h_n=(i_0,a_0,b_0,i_1,\ldots,a_{n-1},b_{n-1},i_n)\in H_n$,
		\begin{align}\label{h-ineq2}
				0 \leq &   {e^{ \theta (\varepsilon-\rho^*)}} \mathcal{L}^{ \pi_n(\cdot|h_n),\psi_h(\cdot|i_n)}h^* (i_n) -h^*(i_n)\notag\\
				=&   {e^{ \theta (\varepsilon-\rho^*)}}  \sum_{a\in A(i_n)}\sum_{b\in B(i_n)}\sum_{j\in E} e^{\theta c(i_n,a,b)}h^*(j)P(j|i_n,a,b)\pi_n(a|h_n)\psi_h(b|i_n)-h^*(i_n) \notag\\
				=&\mathbb{E}^{\pi,\psi_h}_{i}\big[ {e^{ \theta (\varepsilon-\rho^*)}}  e^{\theta c(X_n,A_n,B_n)}h^*(X_{n+1})-h^*(X_n)\big|Y_n=h_n\big],
			\end{align}
		where   $Y_{n }$ is defined in (\ref{Yn}).  
		Hence, it holds that
		\begin{align*}
				&   {e^{ (n+1)\theta (\varepsilon-\rho^*)}}  \mathbb{E}^{\pi,\psi_h}_{i}\big[e^{\theta \sum_{k=0}^{n }c(X_k,A_k,B_k)}h^*(X_{n+1})\big]\\
				=&  {e^{ n\theta (\varepsilon-\rho^*)}}  \mathbb{E}^{\pi,\psi_h}_{i}\Big[e^{\theta \sum_{k=0}^{n-1}c(X_k,A_k,B_k)}  \mathbb{E}^{\pi,\psi_h}_{i}\big[ {e^{ \theta (\varepsilon-\rho^*)}} e^{\theta  c(X_n,A_n,B_n)}h^*(X_{n+1})-h^*(X_{n})  \big|Y_n\big]\Big]\\
				&+  {e^{ n\theta (\varepsilon-\rho^*)}}  \mathbb{E}^{\pi,\psi_h}_{i} \big[e^{\theta \sum_{k=0}^{n-1}c(X_k,A_k,B_k)}h^*(X_{n})\big]\notag\\
				\geq & {e^{ n\theta (\varepsilon-\rho^*)}}  \mathbb{E}^{\pi,\psi_h}_{i} \big[e^{\theta \sum_{k=0}^{n-1}c(X_k,A_k,B_k)}h^*(X_{n})\big]  \qquad\qquad\qquad\qquad\qquad \forall n\in \mathbb{N},i\in E.
			\end{align*}
		Consequently, employing the    display above and an  induction argument, we obtain
		\begin{equation*}
			 {e^{ n\theta (\varepsilon-\rho^*)}} \EE^{\pi,\psi_h}_{i}\big[e^{\theta \sum_{k=0}^{n-1}c(X_k,A_k,B_k)}h^*(X_{n})\big]\geq   \EE^{\pi,\psi_h}_{i}\big[   h^*(X_{0})\big]\geq \min_{j\in E}h^*(j) \quad \forall n\in \mathbb{N},i\in E.
			\end{equation*}
	 From    Proposition \ref{pro-value} (b), we have    $\min_{j\in E}h^*(j)>0$. Hence,   
		\begin{equation}\label{h-ineq}
			 {\|h^*\|  }({\min_{j\in E}h^*(j)})^{-1} \EE^{\pi,\psi_h}_{i}\big[e^{\theta \sum_{k=0}^{n-1}c(X_k,A_k,B_k)} \big] \geq 	e^{n\theta (\rho^*-\varepsilon)}  \quad\forall n\geq 1, i\in E.
			\end{equation}
		Taking logarithm on  both sides, dividing by $\theta n$ and   letting $n\to \infty$ in (\ref{h-ineq}), we get
		$	J(i,\pi,\psi_h)\geq \rho^*-\varepsilon$ for all $i\in E$. Then, the arbitrariness of $\pi$ indicates that
	$$
				\inf_{\pi\in \Pi_1}	J(i,\pi,\psi_h)\geq \rho^*-\varepsilon \quad \forall i\in E, $$
which together with Definition \ref{defn-saddle} (b) and Proposition \ref{pro-value} (c) implies that $\psi_h$ is an $\varepsilon$-optimal policy for player 2.
		By analogous arguments as above we have that $\varphi_h$ is an $\varepsilon$-optimal policy for player 1. Therefore,  $(\varphi_h,\psi_h)$ is an $\varepsilon$-saddle point.
\end{proof}

According to Proposition \ref{var-sd}, to get an $\varepsilon$-saddle point, we only need to find   a function $h$  satisfying the condition $h\leq h^* \leq e^{\theta \varepsilon}h$,  and then obtain a mini-max selector of  $h$ by solving (\ref{LP}). Now we introduce some notation.
\begin{itemize}
\item [\rm i)]  Fix any state $i_*\in E$  in what follows.  Observing  that $h^*(i_*)>0$ (by  Proposition \ref{pro-value} (b)), and that the pair $(\rho^*, r h^*)$   satisfying $e^{\theta \rho^*}(rh^*)=\mathcal{L}(rh^*)$ for all $r> 0$,   without loss of generality,     assume that $h^*(i_*)=1$.

\item [\rm ii)]Denote 
\begin{align}\label{Mc}
M_c=\max_{(i,a,b)\in \mathbb{K}}c(i,a,b)-\min_{(i,a,b)\in \mathbb{K}}c(i,a,b).
\end{align} Obviously, if $M_c=0$, i.e., $c$ is a constant function,  any policy pair is a saddle point of the game. Hence, in what follows, we assume that $M_c>0$. 

\item [iii)] Given any $\varepsilon>0$ and $\theta\in (0, \frac{-\ln   (1-\gamma) }{|E|   M_{c}})$ ( $-ln 0:=\infty$), set
\begin{align}\label{l-var}
	&k_{\varepsilon}:= \begin{cases} \min\Big\{k\geq 1 \Big| k> \frac{\ln \gamma+  \ln   (1- (1-\gamma) e^{\theta |E|    M_{c}}  )+\ln  ( \exp(\frac{\theta \varepsilon}{2})-1)  - 2  \theta |E|   M_{c}}{  \theta   {|E|   M_{c}} + \ln  (1-\gamma) }\Big\},  &\gamma<1;\\
		1, \quad &\gamma=1;
	\end{cases}\\
	&N_{\varepsilon}:=   k_{\varepsilon} |E|, \label{N-var}
\end{align}
where $\gamma  $ is the irreducibility coefficient defined   in  (\ref{gamma}).
Under Assumption \ref{ir-ass} and the condition $ \theta\in (0, \frac{-\ln   (1-\gamma) }{|E|   M_{c}})$,
 both $k_\varepsilon$ and $N_\varepsilon$   are well defined.

\item [\rm iv)] For each $\varepsilon>0$, let  $\rho_\varepsilon$  denote the $\frac{\varepsilon}{2N_\varepsilon}$-approximation of the value computed by Algorithm \ref{algo1}, and therefore
$
	0\leq \rho_\varepsilon-\rho^*\leq {\varepsilon}/{2N_\varepsilon}.
$
Furthermore,  define a sequence  $\{U_n^\varepsilon,n\in \mathbb{N}\}$ of functions on $E$ by
\begin{align}\label{defn-U}
U_0^\varepsilon(i):=\delta_{i_*}(i), \quad	U_{n+1}^\varepsilon (i):=\delta_{i_*}(i)+e^{-\theta   \rho_\varepsilon  }(1-\delta_{{i_*}}(i))\mathcal{L}U_{n}^\varepsilon (i),  \quad i\in E.
\end{align}

\end{itemize}

 \begin{thm}\label{thm-sd}
	Assume that Assumption \ref{ir-ass} holds.   For any $\varepsilon>0$  and $ \theta\in (0, \frac{-\ln  (1-\gamma) }{|E|   M_{c}})$,     the following statements hold.
	\begin{itemize}
		\item [\rm (a)] The function $U^{\varepsilon}_{N_\varepsilon}$ satisfies the condition
		$
		U_{N_\varepsilon}^\varepsilon  \leq   h^* \leq   e^{\theta\varepsilon}U_{N_{\varepsilon}}^\varepsilon.
		$
		\item [\rm (b)] For each $i\in E$,  solve the linear program  and its dual  problem  in (\ref{LP}) with $h= U_{N_{\varepsilon}}^\varepsilon$, and denote their optimal solutions by $(w_\varepsilon(i),\varphi_\varepsilon(\cdot|i))$ and $(v_\varepsilon(i),\psi_\varepsilon(\cdot|i))$, respectively. Then, $(\varphi_\varepsilon,\psi_\varepsilon)$ is an $\varepsilon$-saddle point. 
	\end{itemize}
\end{thm}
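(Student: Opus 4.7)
The plan is to prove the sandwich bound $U_{N_\varepsilon}^\varepsilon \le h^* \le e^{\theta\varepsilon} U_{N_\varepsilon}^\varepsilon$ asserted in part (a), and then derive part (b) as an essentially immediate consequence of Proposition~\ref{var-sd}.

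For the upper inequality $U_n^\varepsilon \le h^*$, I would argue by induction on $n$. Rewriting the recursion as $U_{n+1}^\varepsilon = T U_n^\varepsilon$ with the operator $Th := \delta_{i_*} + e^{-\theta\rho_\varepsilon}(1-\delta_{i_*})\mathcal{L}h$, monotonicity of $T$ is inherited from (\ref{L-mono}). Combining the Shapley equation $e^{\theta\rho^*}h^* = \mathcal{L}h^*$, the normalization $h^*(i_*)=1$, and $\rho_\varepsilon \ge \rho^*$ yields $Th^* \le h^*$, so induction from the base case $U_0^\varepsilon = \delta_{i_*} \le h^*$ produces $U_n^\varepsilon \le h^*$ for every $n$. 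A parallel induction starting from the easy observation $U_0^\varepsilon \le T U_0^\varepsilon = U_1^\varepsilon$ shows $\{U_n^\varepsilon\}$ is non-decreasing.

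The main obstacle is the lower inequality $h^* \le e^{\theta\varepsilon}U_{N_\varepsilon}^\varepsilon$. The key observation is that, by unrolling the recursion with mini-max selectors of each $U_k^\varepsilon$ supplied by Lemma~\ref{lem-minimax}, $U_n^\varepsilon$ admits the stopped-game representation
\begin{equation*}
U_n^\varepsilon(i) \;=\; \inf_{\pi\in\Pi_1}\sup_{\sigma\in\Pi_2} \EE_i^{\pi,\sigma}\!\Bigl[e^{\theta\sum_{k=0}^{\tau_{i_*}-1} c(X_k,A_k,B_k)} e^{-\theta\rho_\varepsilon\tau_{i_*}} \mathbf{1}(\tau_{i_*}\le n)\Bigr], \qquad i\neq i_*,
\end{equation*}
while iterating the Shapley equation along a stationary saddle $(\varphi^*,\psi^*)$ of $h^*$ and passing to the first hitting time $\tau_{i_*}$ (using $h^*(i_*)=1$ and optional stopping, which is legitimate under the assumed exponential-moment regime) gives the companion identity
\begin{equation*}
h^*(i) \;=\; \EE_i^{\varphi^*,\psi^*}\!\Bigl[e^{\theta\sum_{k=0}^{\tau_{i_*}-1} c(X_k,A_k,B_k)} e^{-\theta\rho^*\tau_{i_*}}\Bigr].
\end{equation*}
The difference $h^*(i)-U_{N_\varepsilon}^\varepsilon(i)$ therefore decomposes into a discount-mismatch contribution on $\{\tau_{i_*}\le N_\varepsilon\}$, controlled by $\rho_\varepsilon-\rho^* \le \varepsilon/(2N_\varepsilon)$, and a tail contribution on $\{\tau_{i_*}>N_\varepsilon\}$.

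The delicate step is bounding the tail. Proposition~\ref{prop-N} and the Markov property yield $\PP(\tau_{i_*}>k|E|)\le (1-\gamma)^k$ uniformly over history-dependent policy pairs; combined with the pointwise bound $e^{\theta\sum_{k=0}^{\tau_{i_*}-1}c}\le e^{\theta\tau_{i_*} M_c}$ and a block decomposition into windows of length $|E|$, the tail expectation is geometrically small precisely because the condition $\theta < -\ln(1-\gamma)/(|E|M_c)$ ensures $(1-\gamma)e^{\theta|E|M_c}<1$. The explicit choice of $k_\varepsilon$ in (\ref{l-var}) and hence $N_\varepsilon = k_\varepsilon|E|$ is calibrated so that the mismatch and tail contributions together are bounded by $(e^{\theta\varepsilon}-1)\,U_{N_\varepsilon}^\varepsilon(i)$, which is exactly the required $h^*(i) \le e^{\theta\varepsilon}U_{N_\varepsilon}^\varepsilon(i)$. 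Matching the constants in (\ref{l-var}) to the estimate will be the main computational hurdle.

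Once (a) is in hand, part (b) is immediate. By construction, $(\varphi_\varepsilon(\cdot|i),\psi_\varepsilon(\cdot|i))$ solves the primal–dual pair in (\ref{LP}) for $h = U_{N_\varepsilon}^\varepsilon$ at each state $i$, so by Lemma~\ref{lem-minimax} it is a mini-max selector of $U_{N_\varepsilon}^\varepsilon$. Applying Proposition~\ref{var-sd} with this $h$ (whose two-sided bound was just established in (a)) then yields that $(\varphi_\varepsilon,\psi_\varepsilon)$ is an $\varepsilon$-saddle point.
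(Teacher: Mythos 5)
Your treatment of the easy half of (a) (the induction giving $U_n^\varepsilon\le h^*$ from $\rho_\varepsilon\ge\rho^*$ and $Th^*\le h^*$) and of part (b) (mini-max selector of $U_{N_\varepsilon}^\varepsilon$ via Lemma \ref{lem-minimax}, then Proposition \ref{var-sd}) coincides with the paper. For the hard half, $h^*\le e^{\theta\varepsilon}U_{N_\varepsilon}^\varepsilon$, you take a genuinely different route: the paper works entirely analytically, introducing the auxiliary sequence $U_n^*$ (the same recursion driven by $\rho^*$), proving a contraction estimate $\|U_{(n+1)|E|}^*-U_{n|E|}^*\|\le\beta_\theta\|U_{n|E|}^*-U_{(n-1)|E|}^*\|$ with $\beta_\theta=e^{\theta|E|M_c}\max_iV_{|E|}^{i_*}(i)<1$, and then converting the resulting additive error into a multiplicative one. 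You instead use a stopped-game/optional-stopping representation of both $h^*$ and $U_n^\varepsilon$ at the hitting time $\tau_{i_*}$ and bound the tail $\{\tau_{i_*}>N_\varepsilon\}$ geometrically via $\PP(\tau_{i_*}>k|E|)\le(1-\gamma)^k$. Both arguments turn on the same smallness condition $(1-\gamma)e^{\theta|E|M_c}<1$; yours is probabilistically more transparent, while the paper's avoids having to justify the inf--sup stopped-game representation of $U_n^\varepsilon$ (a finite-horizon dynamic-programming verification you assert but do not prove) and avoids the policy-comparison issue, which in your setup requires the standard device $\inf\sup F-\inf\sup G\le\sup_{\pi,\sigma}(F-G)$ since $h^*$ and $U_{N_\varepsilon}^\varepsilon$ are not evaluated along a common policy pair.

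Two concrete points remain open in your sketch. First, to pass from ``mismatch plus tail is small'' to the multiplicative bound $h^*(i)\le e^{\theta\varepsilon}U_{N_\varepsilon}^\varepsilon(i)$ you need a strictly positive lower bound on $U_{N_\varepsilon}^\varepsilon(i)$ for $i\ne i_*$; your target inequality ``$\le(e^{\theta\varepsilon}-1)U_{N_\varepsilon}^\varepsilon(i)$'' presupposes one but you never produce it. This is exactly Step 3 of the paper's proof, which shows $U_{N_\varepsilon}^*(i)\ge e^{-\theta|E|M_c}(1-V_{|E|}^{i_*}(i))\ge\gamma e^{-\theta|E|M_c}$ by a separate induction; without this ingredient the additive tail estimate cannot be closed. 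Second, the theorem asserts the bound for the \emph{specific} $N_\varepsilon=k_\varepsilon|E|$ defined in (\ref{l-var})--(\ref{N-var}); those constants are calibrated to the paper's contraction estimate, and you acknowledge but do not carry out the verification that your probabilistic tail bound meets the same threshold at that same $N_\varepsilon$. Until both items are supplied, the proof of the inequality $h^*\le e^{\theta\varepsilon}U_{N_\varepsilon}^\varepsilon$ is incomplete.
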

\begin{proof}
	(a) We    establish  part (a) by steps.
	
	{\bf Step 1:} We show   that                                                                                                                                                                                                                                                                                                                                                                                                                                                                                                                                                                                                                                                                                                                                                                                                                                                                                                                                                                                                                                                                  
	\begin{equation}\label{thm-sd-1}
		U_n^{\varepsilon }  \leq  h^*  \quad \forall n\in \mathbb{N}.
	\end{equation}
	It follows from $h^*({i_*})=1$ and $h^*\in \mathcal{M}_+$ that $U^\varepsilon_0=\delta_{i_*}\leq h^*$. Suppose that  $U^\varepsilon_n\leq h^*$ is true for some $n\geq 0$, then using $
	\rho^*\leq \rho_\varepsilon
	$, (\ref{defn-U}), and Proposition \ref{pro-value} we have that
	\begin{align}
		U_{n+1}^{\varepsilon }(i)=e^{-\theta \rho_\varepsilon}\mathcal{L}U_n^\varepsilon(i)\leq e^{-\theta \rho^*}\mathcal{L}h^*(i)=h^*(i) \quad \forall i\in E\setminus \{{i_*}\},
	\end{align}
	which, together with $h^*({i_*})=	U_{n+1}^{\varepsilon }({i_*})=1$, implies that $U_{n+1}^{\varepsilon } \leq  h^*$. Therefore, by induction we obtain (\ref{thm-sd-1}).
	
	{\bf Step 2: }   We prove   that
	\begin{align}\label{Gineq2-1}
		 h^*(i)\leq U_{N_\varepsilon}^*(i)+ \gamma	e^{-\theta|E|  M_{c}}  (e^{\frac{\theta\varepsilon}{2}}-1) \quad \forall i\in E.
	\end{align}
where  $U_0^*  :=\delta_{i_*}=U_0^\varepsilon $ and
\begin{align}\label{defn-UU}
 U_{n+1}^*  (i) :=\delta_{i_*}(i)+e^{-\theta   \rho^*  }(1-\delta_{{i_*}}(i))\mathcal{L}U_{n}^*  (i),  \quad i\in E.
\end{align}
Note that $U_1^*\geq \delta_{i^*}=U_0^*$. Moreover, if $U_{n+1}^*\geq U_{n}^*$ for some $n\in \mathbb{N}$, then by (\ref{L-mono}) we have
$$
U_{n+2}^*(i)=	e^{-\theta   \rho^*  }(1-\delta_{{i_*}}(i))\mathcal{L}U_{n+1}^*  (i) \geq 	e^{-\theta   \rho^*  }(1-\delta_{{i_*}}(i))\mathcal{L}U_{n}^*  (i) =U_{n+1}(i) \quad \forall i\in E\setminus \{i^*\},
$$
which together with (\ref{defn-UU}) implies that $U_{n+2}^* \geq U_{n+1}^*$. Hence, by induction  we  deduce  
\begin{align}\label{U-mono}
	U^*_{n+1}\geq U^*_n \quad \forall n\in \mathbb{N}.
\end{align}
Next,  we claim that for all $ m,n,k\in \mathbb{N}$, it holds that
\begin{align}\label{U-ineq}
   {U}^*_{ m+k}(i)\leq {U}^*_{n+k}(i)+ \|{U}^*_{m}-{U}^*_{n}\| e^{\theta k  M_{c}} V_k^{i_*}(i) \quad \forall i\in E\setminus \{i^*\},
\end{align}
where $V_k^ {i_*} $ is defined in (\ref{Vxy}).
Let $m$ and $n$ be fixed but arbitrary. The proof of (\ref{U-ineq}) proceeds by induction on $k$. 
Since    $V_{0}^{i_*}\equiv1 $ (by (\ref{Vxy})) and ${U}^*_{ m } \leq {U}^*_{ n  }+ \|{U}^*_{m}-{U}^*_{n}\|$,  
    (\ref{U-ineq}) holds for $k=0$.  For the induction step, assume that  (\ref{U-ineq}) holds for some $k\geq 0$. Then, for any $i\in E\setminus \{i^*\}$, $a\in A(i)$, and $b\in B(i)$,
    using   $U_{n+k}^*(i^*)=U_{m+k}^*(i^*)=1$ we obtain 
    \begin{align*}
    &\sum_{j\in E}P(j|i,a,b)U_{m+k}^*(j)\\
    =&  P(i^*|i,a,b)U_{n+k}^*(i^*)+\sum_{j\in E\setminus \{i^*\}}P(j|i,a,b)U^*_{m+k}(j)   \notag\\
    \leq &  P(i^*|i,a,b)U_{n+k}^*(i^*)+\sum_{j\in E\setminus \{i^*\}}P(j|i,a,b)({U}^*_{n+k}(j)+ \|{U}^*_{m}-{U}^*_{n}\| e^{\theta k  M_{c}} V_k^{i_*}(j)) \notag\\
   =& \sum_{j\in E }P(j|i,a,b){U}^*_{n+k}(j)+\|{U}^*_{m}-{U}^*_{n}\| e^{\theta k  M_{c}} \sum_{j\in E\setminus \{i^*\}}P(j|i,a,b)  V_k^{i_*}(j)  \notag\\
   \leq& \sum_{j\in E }P(j|i,a,b){U}^*_{n+k}(j)+\|{U}^*_{m}-{U}^*_{n}\| e^{\theta k  M_{c}} V_{k+1}^{i_*}(i),
    \end{align*}
   where the first and last inequalities are due to the   induction hypothesis and (\ref{Vxy}), respectively. This, together with (\ref{L}),   (\ref{L-uv}), and (\ref{defn-UU}), implies that for each $i\in E\setminus \{i^*\}$, 
    \begin{align}\label{U-mnk}
	 {U}^*_{m+k+1}(i) 
	=& e^{-\theta \rho^*}\inf_{\mu\in \mathcal{P}(A(i))}\sup_{\nu\in \mathcal{P}(B(i))}\Big\{\sum_{a\in A(i)}\sum_{b\in B(i)}\mu(a)\nu(b)e^{\theta c(i,a,b)}\sum_{j\in E}P(j|i,a,b)	{U}^*_{m+k }(j) \Big\} \notag\\
	\leq &e^{-\theta \rho^*}\inf_{\mu\in \mathcal{P}(A(i))}\sup_{\nu\in \mathcal{P}(B(i))}\Big\{\sum_{a\in A(i)}\sum_{b\in B(i)}\mu(a)\nu(b)e^{\theta c(i,a,b)} \sum_{j\in E }P(j|i,a,b){U}^*_{n+k}(j)\notag\\
	&\qquad\qquad  \qquad \qquad\qquad +\sum_{a\in A(i)}\sum_{b\in B(i)}\mu(a)\nu(b)e^{\theta c(i,a,b)}\|{U}^*_{m}-{U}^*_{n}\| e^{\theta k  M_{c}} V_{k+1}^{i_*}(i) \Big\} \notag\\
	\leq &\mathcal{L} {U}^*_{n+k}(i)+\|{U}^*_{m}-{U}^*_{n}\| e^{\theta k  M_{c}} V_{k+1}^{i_*}(i)  e^{\theta (||c||-\rho^*)},
	\end{align}
with $||c||=\max_{(i,a,b)\in\mathbb{K}}c(i,a,b)$. 
Since (\ref{d-rs}) implies  
$ 
 \min_{(i,a,b)\in \mathbb{K}}c(i,a,b) \leq J(i,\pi,\sigma) \leq  ||c||$ 
for all $i\in E$  and $(\pi,\sigma)\in \Pi_1\times \Pi_2$,   we have that
\begin{align}\label{rho-ineq}
	 \min_{(i,a,b)\in \mathbb{K}}c(i,a,b) \leq  \rho^* \leq  ||c||,
\end{align} 
 which together with (\ref{Mc}) yields 
$
  ||c|| - \rho^*\leq ||c||-\min_{(i,a,b)\in \mathbb{K}}c(i,a,b)=M_c.
$
Combining this and (\ref{U-mnk}) we obtain 
\begin{align*}
	 {U}^*_{m+k+1}(i)  \leq \mathcal{L} {U}^*_{n+k}(i)+\|{U}^*_{m}-{U}^*_{n}\| e^{\theta {k+1}  M_{c}} V_{k+1}^{i_*}(i)  \quad \forall  i\in E\setminus \{i^*\}.
\end{align*}
  Hence,  by induction we establish this claim.
  Then,  using (\ref{defn-UU})-(\ref{U-ineq})    we have that 
 \begin{align*}
\|U_{(n+1)|E|}^*-U_{n|E|}^*\| 
=&\max_{i\in E\setminus\{i^*\}}(U_{(n+1)|E|}^*(i)-U_{n|E|}^*(i)) \\
\leq & e^{\theta|E|M_c}\Big(\max_{i\in E}V_{|E|}^{i_*} (i)\Big) \|U_{n|E|}^*-U_{(n-1)|E|}^*\| 
=  \beta_\theta \|U_{n|E|}^*-U_{(n-1)|E|}^*\|,
 \end{align*}
for all $n\geq 1$, with 
$ 
 	\beta_{\theta}:=    e^{ \theta|E|    M_{c}}\max_{i\in E}V_{|E|}^{i_*} (i).
$ 
This, together with   (\ref{N-var}), implies that
 \begin{align}\label{U-z1}
 \|U_{N_\varepsilon+|E|}^*-U_{N_\varepsilon}^*\|  \leq \beta_\theta  \|U_{N_\varepsilon}^*-U_{N_\varepsilon-|E|}^*\| \leq \cdots  \leq \beta^{k_\varepsilon}_\theta \|U^*_{|E|}-U^*_{0}\| \leq  \beta^{k_\varepsilon}_\theta \|U^*_{|E|} \|.
 \end{align}
By $||U_0^*||=1$, we see that $\|U^*_{ n } \|\leq e^{n\theta  M_c}$ holds when $n=0$.  If for some $n\geq 0$, it holds that $\|U^*_{ n } \|\leq e^{n\theta   M_c}$, then     by (\ref{defn-UU})  we deduce that
  \begin{align}\label{U1-ineq2}
 	\|U^*_{ n+1 } \| =\max\{1, \max_{i\in E\setminus \{i\}}e^{-\theta\rho^*}\mathcal{L}U_{n }^*(i)\}\leq  \max\{1,  ||U_{n }^*||  e^{\theta (\|c\|-\rho^*)}\} \leq e^{ (n+1)\theta   M_c},
 \end{align}
where the first  and last inequalities follow from  (\ref{L-1})  and    $\|c\|-\rho^*\leq M_c$, respectively.
 Thus,  by induction we get  $\|U^*_{ n } \|\leq e^{n\theta   M_c}$ for all $n\in \mathbb{N}$, which together with (\ref{U-z1}) gives 
  \begin{align}\label{U-z2}
 	\|U_{N_\varepsilon+|E|}^*-U_{N_\varepsilon}^*\|\leq  \beta^{k_\varepsilon}_\theta e^{ |E| \theta   M_c}.
 \end{align}
Combining the definition of $\beta_{\theta}$ and Proposition \ref{prop-N}, we obtain that
$ 
\beta_{\theta}\leq  e^{ \theta|E|    M_{c}}(1-\gamma),
$ 
which together with  $ \theta\in (0, \frac{-\ln   (1-\gamma) }{|E|   M_{c}})$ implies   $\beta_\theta<1$. Then,
 by (\ref{l-var}) and (\ref{U-z2}), we have
\begin{align}\label{U-var1}
 \|U_{N_\varepsilon+|E|}^*-U_{N_\varepsilon}^*\|  \leq  \beta^{k_\varepsilon}_\theta e^{\theta |E|M_c}  \leq \gamma e^{-\theta|E|  M_{c}}(1-\beta_\theta) (e^{\frac{\theta\varepsilon}{2}}-1).
\end{align}
On the other hand,  it follows from $h^*({i_*})=1$ and Proposition \ref{pro-value} (b)  that
   \begin{align}\label{h-SE}
     h^*(i)=\delta_{i_*}(i)+e^{-\theta \rho^*}(1-\delta_{i_*}(i))\mathcal{L}h^*(i) \quad \forall i\in E.
 \end{align}                                                                            Using  the arguments similar to the proof of  (\ref{U-ineq}), we also have that
\begin{align}\label{h-U-ineq}
h^*(i)\leq   {U}^*_{n+k} (i)+\|h^*- {U}_{n }^*\|  e^{\theta k  M_{c}}(1-\delta_{i_*}(i))V_k^{i_*}(i) \quad \forall  i\in E\setminus \{i^*\}, k,n\in \mathbb{N}.
\end{align}
Combining   (\ref{U-var1})  and (\ref{h-U-ineq}), we get that
 \begin{align*}
 \|h^*-U_{N_\varepsilon}^*\|\leq & \|h^*-U_{N_\varepsilon+|E|}^*\|+\|U_{N_\varepsilon+|E|}^*-U_{N_\varepsilon}^*\| \notag\\
 \leq  & \beta_\theta \|h^*-U_{N_\varepsilon}^*\|+ \gamma	e^{-\theta|E|  M_{c}}(1-\beta_\theta) (e^{\frac{\theta\varepsilon}{2}}-1).
 \end{align*}
This gives that $h^*(i)\leq U_{N_\varepsilon}^*(i)+ \gamma	e^{-\theta|E|  M_{c}}  (e^{\frac{\theta\varepsilon}{2}}-1) $    for all $i\in E$,   i.e., (\ref{Gineq2-1})  is true.

 {\bf Step 3}:
Now, we show that for each $i\in E$, 
 	\begin{equation}\label{Gineq2}
 	h^*(i) \leq e^{{\theta \varepsilon}/{2}}  {U}_{N_{\varepsilon}}^*(i).
 \end{equation}
First, we claim that   for each $k\geq 0$,
 \begin{align}\label{Tz-1}
 	e^{-k\theta  M_{c} }(1-V_k^{i_*}(i))	\leq  U^*_k(i) \quad \forall  i\in E\setminus \{{i_*}\}.
 \end{align}
By   (\ref{Vxy}) and $U^*_0\in \mathcal{M}_+$,  (\ref{Tz-1}) holds for $k=0$.  For the induction
 step,   assume that (\ref{Tz-1}) is true for some $k\geq 0$.   Then, for each $i\in E\setminus \{{i_*}\}$,  we obtain that
 \begin{align*}
  U^*_{k+1}(i)=  &e^{-\theta \rho^*}\mathcal{L}  U^*_{k }(i) \\
 	\geq &e^{-\theta \rho^*} \min_{a\in  A(i) }\min_{b\in  B(i) } e^{\theta c(i,a,b)} \big(\sum_{j\neq {i_*}} e^{- k\theta  M_{c} }(1-V_k^{i_*}(j))P(j|i,a,b)+P({i_*}|i,a,b) \big)   \\
 	\geq &e^{- k\theta  M_{c} }e^{-\theta  \rho^*  } \min_{a\in  A(i) }\min_{b\in  B(i) } e^{\theta c(i,a,b)} \big(1-\sum_{j\neq {i_*}}  V_k^{i_*}(j) P(j|i,a,b)  \big)   \\
 	\geq &e^{- k\theta  M_{c} }  \min_{a\in  A(i) }\min_{b\in  B(i) } e^{-\theta (\rho^*- c(i,a,b))}  \times\min_{a\in  A(i) }\min_{b\in  B(i) }(1-\sum_{j\neq {i_*}}  V_k^{i_*}(j) P(j|i,a,b)  \big)   \\
 	\geq &e^{- (k+1) \theta  M_{c} }(1- V_{k+1}^{i_*}(i)),    
 \end{align*}
 where  the last inequality is due to (\ref{Vxy}) and $\rho^*-\min_{(i,a,b)\in \mathbb{K}}c(i,a,b)\leq M_c$.    Therefore,  by induction (\ref{Tz-1}) holds for all $k\geq 0$.
Noting that $N_\varepsilon=k_\varepsilon|E|\geq |E|$,  we obtain by (\ref{U-mono}), (\ref{Tz-1}) and Proposition \ref{prop-N} (a) that
 \begin{align*}
 U_{N_\varepsilon}^*(i)   \geq 	U_{|E|}^*(i) \geq e^{-\theta|E|  M_{c}} (1-V_{|E|}^{i_*}(i))
  \geq \gamma e^{-\theta|E|  M_{c}}   \quad \forall i\in E\setminus \{{i_*}\}.
 \end{align*}
 This,  together with (\ref{Gineq2-1})  and $h^*(i^*)= U_{N_\varepsilon}^*(i^*)=1$, completes Step 3.

 {\bf Step 4:} Using  $
 0\leq \rho_\varepsilon-\rho^*\leq {\varepsilon}/{2N_\varepsilon}
 $, (\ref{defn-U}), and (\ref{defn-UU}) we obtain by induction that
 \begin{align}
 	U^*_n \leq e^{ {n\theta \varepsilon}/{2N_\varepsilon}}U^\varepsilon_n \quad \forall n\geq 0,
 \end{align}
 which together with (\ref{Gineq2}) yields that
$
 h^* \leq e^{ {\theta  \varepsilon}/{2 }}U^*_{N_\varepsilon}\leq e^{ {N_\varepsilon\theta \varepsilon}/{2N_\varepsilon}}  e^{ {  \theta\varepsilon}/{2 }}U^{\varepsilon}_{N_\varepsilon} =e^{\theta \varepsilon}U^{\varepsilon}_{N_\varepsilon}.
$
This  and (\ref{thm-sd-1}) lead to part (a).

 (b) It directly follows from part (a) and Proposition \ref{var-sd}.
\end{proof}

\begin{cor}\label{sd-cor}
	If we take  $i_*:=\arg\min\limits_{j\in E}\{\max\limits_{i\in E}V_{|E|}^{j}(i)\}$ and  replace $\gamma$ by $ \eta:=1-\max_{i\in E}V_{|E|}^{i_*}(i) $ in (\ref{l-var}), then for any $\theta \in (0, \frac{-\ln (1-\eta)}{|E| M_c})$, the results in Theorem \ref{thm-sd} are still valid.
\end{cor}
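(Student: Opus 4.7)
The plan is to inspect the proof of Theorem~\ref{thm-sd} and track exactly where the constant $\gamma$ is used; the claim will follow by verifying that each such use requires only the pointwise inequality ``constant $\leq 1 - V_{|E|}^{i_*}(i)$ for every $i\in E$'', and that the new $\eta$ (together with the new choice of $i_*$) satisfies this by construction. Since $h^* \in \mathcal{M}_+^o$ and the Shapley equation is positively homogeneous, the normalization $h^*(i_*) = 1$ may be imposed for \emph{any} chosen $i_*$, so nothing prevents us from picking $i_* = \arg\min_j\{\max_i V_{|E|}^j(i)\}$.

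With this choice, $\eta = 1 - \max_{i\in E} V_{|E|}^{i_*}(i) = 1 - \min_j \max_i V_{|E|}^j(i) \geq \gamma$ by Proposition~\ref{prop-N}(a), and $\eta \leq 1 - V_{|E|}^{i_*}(i)$ for every $i\in E$ by definition. First I would revisit Step~2 of the proof of Theorem~\ref{thm-sd}(a): there $\gamma$ entered only through the estimate $\beta_\theta := e^{\theta|E|M_c}\max_{i\in E} V_{|E|}^{i_*}(i) \leq e^{\theta|E|M_c}(1-\gamma)$; with the new $i_*$ this sharpens to the identity $\beta_\theta = e^{\theta|E|M_c}(1-\eta)$, and the hypothesis $\theta \in (0, -\ln(1-\eta)/(|E|M_c))$ is precisely what guarantees $\beta_\theta < 1$. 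The modified $k_\varepsilon$ from (\ref{l-var}) (with $\eta$ in place of $\gamma$) then makes the chain of inequalities culminating in (\ref{U-var1}) go through verbatim, yielding $h^*(i) \leq U_{N_\varepsilon}^*(i) + \eta e^{-\theta|E|M_c}(e^{\theta\varepsilon/2} - 1)$.

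Next I would check Step~3, where the lower bound $U_{N_\varepsilon}^*(i) \geq e^{-\theta|E|M_c}(1 - V_{|E|}^{i_*}(i))$ for $i \in E \setminus \{i_*\}$ was derived without reference to $\gamma$; combining it with $\eta \leq 1 - V_{|E|}^{i_*}(i)$ gives $U_{N_\varepsilon}^*(i) \geq \eta e^{-\theta|E|M_c}$, so that the estimate from Step~2 yields $h^*(i) \leq e^{\theta\varepsilon/2}U_{N_\varepsilon}^*(i)$. Step~4, which interpolates between $U^*_{N_\varepsilon}$ and $U^\varepsilon_{N_\varepsilon}$ using the Algorithm~\ref{algo1} output $\rho_\varepsilon$, is independent of the irreducibility constant and carries over without change. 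Part~(b) is then immediate from part~(a) and Proposition~\ref{var-sd}.

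There is no real obstacle here: the corollary is essentially an \emph{observation} that the original proof uses $\gamma$ only as a lower bound on $1 - V_{|E|}^{i_*}(i)$ (uniformly in $i$), and the choice $i_* = \arg\min_j \max_i V_{|E|}^j(i)$ optimizes this bound to $\eta$. The one item worth stating explicitly is that $N_\varepsilon$ and $k_\varepsilon$ must be recomputed with $\eta$ replacing $\gamma$, and that $\rho_\varepsilon$ in Algorithm~\ref{algo1} must be computed to tolerance $\varepsilon/(2 N_\varepsilon)$ with this \emph{new} $N_\varepsilon$; once this bookkeeping is done, every inequality in the proof of Theorem~\ref{thm-sd} is preserved.
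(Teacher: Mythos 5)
Your proposal is correct and follows the same route as the paper, whose proof of this corollary is literally the one-line remark that replacing $\gamma$ by $\eta$ throughout the proof of Theorem \ref{thm-sd} gives the result; you have simply made explicit the three places where $\gamma$ enters (the bound $\beta_\theta \le e^{\theta|E|M_c}(1-\gamma)$ in Step 2, the lower bound $U^*_{N_\varepsilon}(i)\ge \gamma e^{-\theta|E|M_c}$ in Step 3, and the definitions of $k_\varepsilon$, $N_\varepsilon$) and checked that $\eta \le 1-V^{i_*}_{|E|}(i)$ suffices in each. The bookkeeping remarks about recomputing $N_\varepsilon$ and the tolerance $\varepsilon/(2N_\varepsilon)$ are accurate and consistent with Algorithm \ref{algo2}.
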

\begin{proof}
	By replacing $ \gamma$ with $\eta$ in the proof of Theorem \ref{thm-sd}, we get the corollary. 
\end{proof}

Based on Corollary \ref{sd-cor}, under Assumption  \ref{ir-ass}  and the condition  $ \theta\in (0, \frac{-\ln  (1- \eta) }{|E|   M_{c}})$, we now propose an algorithm  to compute    $\varepsilon$-saddle points.

\begin{algorithm}
	\caption{(An algorithm of $\varepsilon$-saddle points.)}
	\label{algo2}
	\begin{algorithmic}
 	\STATE {\bf Input}: The data of the game and  an accuracy $\varepsilon>0$.
 \STATE {\bf  Step 1} (Initialization): Set $i_*:=\arg\min\limits_{j\in E}\{\max\limits_{i\in E}V_{|E|}^{j}(i)\}$,    $U_0^\varepsilon=\delta_{i_*}$,  $n=1$, and
			\begin{align*}
				&k_{\varepsilon} =\begin{cases} \min\Big\{k\geq  1 \Big| k> \frac{\ln \eta+ \ln   (1- (1-\eta) e^{\theta|E|     M_{c}} ) +\ln  ( \exp(\frac{\theta \varepsilon}{2})-1) - 2  \theta |E|   M_{c}}{   \theta  |E|    M_{c}  + \ln   (1-\eta) }\Big\},  &\eta<1;\\
					1, \quad &\eta=1;
				\end{cases}
			\end{align*}
			and $N_{\varepsilon} =   k_{\varepsilon} |E|$,
			where  $ \eta:=1-\max_{i\in E}V_{|E|}^{i_*}(i) $.

\STATE	  {\bf  Step 2} (Approximate the value):
	 	Input the accuracy $\frac{\varepsilon}{2N_{\varepsilon}}$ in Algorithm \ref{algo1}.  Then an $\frac{\varepsilon}{2N_{\varepsilon}}$-approximation $ {\rho}_\varepsilon $ of the value   is output, and ${\rho}_\varepsilon $ satisfies $0\leq {\rho}_\varepsilon-\rho^*\leq
	 	\frac{\varepsilon}{2N_{\varepsilon}}$.

\STATE {\bf Step 3} (Iteration):
For each $i\in E$,  compute $\mathcal{L}U_{n-1}^\varepsilon(i)$ by solving (\ref{LP}) with $h=U_{n-1}^\varepsilon$, and let
	\begin{equation*}
		U_{n}^\varepsilon(i)=\delta_{i_*}(i)+ e^{-\theta  {\rho}_\varepsilon}(1-\delta_{i_*}(i))\mathcal{L}U_{n-1}^\varepsilon(i).
	\end{equation*}

\STATE {\bf Step 4}: If $n<N_\varepsilon$,  go to Step 3 by replacing $n$ with $n + 1$. Otherwise,  for each $i\in E$, solve the   linear program  problem    and its dual problem in ( \ref{LP}) with $h=U_{n }^\varepsilon$
 and denote    their optimal solutions, respectively, by $(w(i),\varphi_\varepsilon(\cdot|i))$ and $(v(i),\psi_\varepsilon(\cdot|i))$.
\STATE	{ \bf  Output}: An   $\varepsilon$-saddle point $(\varphi_{\varepsilon},\psi_{\varepsilon})$.

	\end{algorithmic}
\end{algorithm}

\section{ An  example of energy management in smart grids}
In this section, we introduce an    example of energy management in smart grids. 

\begin{exm}
There is a prosumer {\bf A} equipped with a  storage unit of maximum capacity $N_s$ and   renewable energy generators, such as   wind turbines,   tidal turbines, or   solar panels. 
The stored charge in the storage unit    may be reduced due to self-discharge resulting from  internal chemical reactions \cite{WW00}.
Additionally, since  the amount of energy generated by   renewable energy generators depends on   natural environments, the energy harvested from renewable resources is random.  Denote by $G_n$ ($n\in \mathbb{N}$) the {\it effective} generated energy during   period $[n, n+1)$,  which is the energy harvested from renewable resources minus the storage self-discharge. As in  \cite{E24,ESMP18}, the distribution of ${G}_{n}$ is assumed to be independent of time and is denoted by $Q$.  
Prosumer {\bf A} can  also purchase    energy   from a utility company   and consume energy. 
When the  storage level at the beginning of    period $[n, n+1)$  is $i_n\in \{0,1,\ldots,N_s\}$,  prosumer {\bf A}  may  purchase $b_n^{p}$ $(0\leq b^{p}_n\leq   N_{p}) $ units of energy,   and    consume $b_n^{c}$ $( 0\leq b^{c}_n\leq  (i_n+b_n^{p})\wedge N_{c} ) $ units of energy, where $N_{p}$ and  $N_{c}$ represent the maximum purchased energy and the maximum consumable energy, respectively. 
  Then,  the   storage level at   the beginning of    period $[n+1, n+2)$  will be  $\min \{N_s, \max\{0,G_n+i_n+ b_n^{p}-b_n^{c}\}\}$. 
  
On the other hand,  since the energy price  set by     the utility company   depends on  total demands of the electricity market,   the cost of purchasing   energy for  prosumer {\bf A} depends on the   energy demand $a\in \{0,1,\ldots, {M}\}$   of the other prosumers, where    $M$  is the maximum purchased energy  by the other prosumers.  
When the   energy demands  of prosumer {\bf A} and  the other prosumers  are $b^p$ and $a$, respectively,    the cost of  prosumer {\bf A}  is denoted by $C(a,b ^p)$.
Moreover,  let $R(b^c)$   represent  the profit    from consuming  $b^c$ units of energy.
Then the payoff of  prosumer {\bf A}   is $R(b^c) -C(a,b^{p} )$. 
Since the    energy demand of the other prosumers is  unknown for  prosumer {\bf A}, 
we suppose that  Prosumer {\bf A} wants to  find an optimal policy in a zero-sum game under the risk-sensitive average criterion. 
\end{exm}

We now formulate this energy management problem as a risk-sensitive stochastic game, where player 2 and player 1 are prosumer {\bf A} and the other prosumers  in the electricity market, respectively.
 Let $E=\{0,1,\ldots,N_s\}$ be the state space.  The action spaces of player 1 and player 2 are $A=\{0,1,\ldots, {M}\}$ and $B=\{0,1,\ldots,N_{p}\}\times \{0,1,\ldots,N_{c}\}$,  respectively. Given any $i\in E$, we have $A(i)=A$ and
 $  B(i)=\{(b^{p}, b^c): b^p\in \{0,1,\ldots,N_{p}\}, b^c\in \{0,1,\ldots, (i+b^p)\wedge N_{s} \}\}.$ 
Moreover, the stochastic kernel $P$ and the   reward function $c$ for player 2 are given by: for all $i,j\in E$, $a\in A(i)$ and $b=(b^p, b^c)\in B(i)$
\begin{align}
&P(j|i,a,b)=\begin{cases}
\sum_{k=-\infty}^{  -(i+b^p-b^c)}Q(k) & j=0,\\
 Q(j-(i+b^p-b^c)) \   & 0<j<N_s, \label{exm-P}\\
 \sum_{k=N_s-(i+b^p-b^c)}^{\infty}Q(k)  \  &j=N_s,
\end{cases} 
&c(i,a,b)=R(b^c) -C(a,b^p),
\end{align}

To  ensure the existence of the value and a saddle point, and compute $\varepsilon$-saddle points for the   game, as in \cite{ESMP18}  we propose the following assumption.
\begin{ass}\label{exm-ass}
 $Q(k)>0$ for all $k\in \{-N_{c},-N_{c}+1,\ldots,0,1,\ldots, N_s\}$. 
\end{ass}
\begin{prop}
Under Assumption \ref{exm-ass},  the game has the value and   a saddle point.
\end{prop}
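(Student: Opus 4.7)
The plan is to reduce everything to Proposition \ref{pro-value}(c), which delivers both the value $J^*=\theta^{-1}\ln\lambda_*$ and a stationary saddle point once the irreducibility condition (Assumption \ref{ir-ass}) has been verified; equivalently, by Proposition \ref{prop-N}(b) it suffices to prove that the irreducibility coefficient $\gamma$ in (\ref{gamma}) is strictly positive. Hence the entire task reduces to showing that, for every deterministic stationary policy pair $(f,g)\in\Pi_1^{sd}\times\Pi_2^{sd}$, the induced chain $\{X_n\}$ on $E=\{0,1,\ldots,N_s\}$ is irreducible.

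Fix such a pair $(f,g)$ and write $g(i)=(b^p(i),b^c(i))$. From (\ref{exm-P}) the one-step dynamics can be summarized as $X_{n+1}=\min\{N_s,\max\{0,G_n+m(X_n)\}\}$, where $m(i):=i+b^p(i)-b^c(i)$ and $G_n\sim Q$ i.i.d.; the admissibility constraints force $m(i)\in[\max(0,i-N_c),\,i+N_p]$. Under Assumption \ref{exm-ass}, $Q$ puts positive mass on every integer in $\{-N_c,\ldots,N_s\}$, so the one-step support from state $i$ contains every value of the form $\min\{N_s,\max\{0,k+m(i)\}\}$ as $k$ ranges over that set. Two convenient consequences are: $P^{f,g}(N_s\mid i)>0$ for every $i$ (take $k=N_s$), and whenever $m(i)\le N_c$ one has $P^{f,g}(0\mid i)>0$ (take $k=-N_c$); more generally, $P^{f,g}(j\mid i)>0$ for every interior $j$ with $j-m(i)\in\{-N_c,\ldots,N_s\}$.

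Using these facts I would verify irreducibility by proving (i) from every state $i$, the chain reaches $0$ in at most $|E|$ steps with positive probability, and (ii) from $0$ every $j\in E$ is reached in finitely many steps with positive probability. Step (ii) is the easier one: $m(0)=b^p(0)-b^c(0)\in[0,N_p]$, so stitching one or two one-step $Q$-transitions with appropriately chosen $k\in\{-N_c,\ldots,N_s\}$ produces a positive-probability path from $0$ to any $j$. For (i), I would iterate the descent map $i\mapsto\max\{0,m(i)-N_c\}$, realized by repeatedly taking $G=-N_c$, and use finiteness of $E$ to conclude that the orbit enters $\{i:m(i)\le N_c\}$, after which one further step reaches $0$.

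The main obstacle will be making the descent in (i) work in the worst case, because a policy can in principle set $b^p(i)-b^c(i)$ large at the upper states and thereby stall the pure $G=-N_c$ iteration. I would handle this by combining the clipping at $N_s$ with the full positivity of $Q$ on $\{-N_c,\ldots,N_s\}$: whenever the $G=-N_c$ step fails to decrease the state, choosing an intermediate $k$ in that range produces a lateral positive-probability move to a distinct state $i'$, and since $E$ is finite the induced walk cannot cycle forever without visiting $\{i:m(i)\le N_c\}$. Once (i) and (ii) are in place, the chain is irreducible under every $(f,g)$, so Assumption \ref{ir-ass} holds and Proposition \ref{pro-value}(c) delivers the value and the stationary saddle point, completing the proof.
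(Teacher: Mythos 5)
Your overall reduction is exactly the paper's: verify Assumption \ref{ir-ass} and invoke Proposition \ref{pro-value}(c). Where you diverge is in how irreducibility is verified, and that is where your argument has a genuine hole. The paper does not build multi-step paths at all: it observes from (\ref{exm-P}) that for every $i,j\in E$ and every admissible $(a,b)$ one has $P(j\mid i,a,b)\ge \min_{-N_c\le k\le N_s}Q(k)>0$ under Assumption \ref{exm-ass}, so the one-step kernel is everywhere positive and the chain is irreducible under every policy pair in a single line. (Your own observations $P^{f,g}(N_s\mid i)\ge Q(N_s)>0$ and $P^{f,g}(0\mid i)\ge Q(-m(i))>0$ when $m(i)\le N_c$ are fragments of this uniform bound.)

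The gap is in your step (i). For states with $m(i)>N_c$ you propose a descent via $i\mapsto\max\{0,m(i)-N_c\}$, concede that a policy can stall it, and then resolve the obstacle by asserting that ``since $E$ is finite the induced walk cannot cycle forever without visiting $\{i:m(i)\le N_c\}$.'' That inference is invalid: a walk on a finite directed graph can perfectly well cycle within a proper subset that never meets the target set, and nothing in your argument (no monotone quantity, no edge into the target from every state of the cycle) rules this out. As written, step (i) is a plan with its central difficulty unproved, so the irreducibility claim --- and hence the proposition --- is not established. The fix is to drop the path construction entirely and use the uniform one-step lower bound above; alternatively, if you insist on the descent route, you must exhibit a concrete positive-probability transition from every state with $m(i)>N_c$ that strictly decreases some potential toward $\{i:m(i)\le N_c\}$, which your lateral-move device does not do.
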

\begin{proof}
By (\ref{exm-P}), it holds that  $\mathbb{P}_i^{\pi,\sigma}(X_1=j)\geq \min\limits_{a\in A(i), b\in B(i)}P(j|i,a,b)\geq  \min\limits_{-N_c\leq k\leq N_s}Q(k)>0$  for all $i,j\in E$ and $(\pi,\sigma)\in \Pi_1\times \Pi_2$, which implies Assumption \ref{ir-ass}. Therefore, by Proposition  \ref{pro-value}  we get the desired result.  
\end{proof}

Next,     we take numerical calculations and compute an $\varepsilon$-approximation of the value and an $\varepsilon$-saddle point for this  game.  Without loss of generality, let $N_s=2$,  $N_{c}=3$, $N_{p}=2$, ${M}=2$, $\theta=0.01$, and
\begin{align}\label{P-G}
&Q(k) =\int_{k}^{k+1} \frac{1}{2\sqrt{2\pi}}\exp( -{(x-1)^2}/8)\d x, \qquad \quad \, k=0,-1,1,-2,2,\ldots;\\
&R(b^{c })=\ln(b^c+0.4)-\ln0.4, \qquad\qquad\qquad \qquad b^c\in \{0,1,2,3\}; \notag \\
& C(a,b^p)=  b^p \times {    (a+b^p )}/{10}  +    {\bf 1}_{(a,\infty)}(b^p )/4 , \qquad a\in \{0,1,2\},b^p\in\{0,1,2\} \notag, 
\end{align}
where   $  (a+b^p )/10$ is the energy price  based on all players’ demand, and $ {\bf 1}_{(a,\infty)}(b^p )/4$ (with  ${\bf 1}_{(a,\infty)}(b^p )=1$ if $b^p>a$, and ${\bf 1}_{(a,\infty)}(b^p )=0$ if $b^p\leq a$)    means that if    player 2 purchases more energy than player 1,  player  2 needs to pay   an additional fee of $1/4$.

It can be verified by (\ref{P-G}) that  Assumption  \ref {exm-ass} holds, so Assumption  \ref{ir-ass} holds.      Moreover, by (\ref{Vxy}), (\ref{exm-P}) and (\ref{P-G}), a directive calculation gives   $i^*=\arg\min\limits_{j\in E}\max\limits_{i\in E}V^{j}_{|E|}(i)=2$ and $
\eta= 1-\max_{i\in E}V^{i^*}_{|E|}(i)=   0.6694$     with $|E|=3$. Then,    we obtain $\frac{-\ln (1-\eta)}{|E|M_c}=   0.1322$  and $\theta=0.01\in (0,\frac{-\ln (1-\eta)}{|E|M_c})$,  where  $M_c=\max\limits_{(i,a,b)\in \mathbb{K}}c(i,a,b)-\min\limits_{(i,a,b)\in \mathbb{K}}c(i,a,b)=     2.7901$. This, together with Theorems \ref{al-value} and Corollary \ref{sd-cor},  implies that both Algorithms  \ref{algo1} and   \ref{algo2} are valid for this example.

Take $\varepsilon=0.05$. Using the data above and Algorithms \ref{algo1} and \ref{algo2}, we compute an $8.3\times 10^{-4}$-approximation $\rho_{\varepsilon}=  1.3217$ of the value  and a  $0.05$-saddle point $(\varphi_{\varepsilon},\psi_{\varepsilon})$ of the game, where   $\varphi_\varepsilon$ and $\psi_\varepsilon$ are shown in Table \ref{DTMG_P1} and Table \ref{DTMG_P2}, respectively, 

	\begin{table}[htp]
 	\centering
  	\fontsize{ 4}{8}\selectfont
	 		\linespread{1.5}
 		\small
 	\begin{tabular}{|c|c|c|c| }
	 			\hline
	 			 \diagbox[width=4.5em, height=2em]{   $i$}{  $a$}& 0&1&2   \\	\hline
		 		 		$0$ &0.0000  &0.2599   &0.7401\\ \hline
		 			 	$1$ &0.7584&0.0000&0.2416 \\ \hline
		 			 		$2$ &1.0000  &0.0000&0.0000 \\ \hline
		 		\end{tabular}
		 			\caption{$0.05$-optimal policy $\varphi_{\varepsilon}$ of player 1}\label{DTMG_P1}
	 	\end{table}
	 
 	\begin{table}[htp]
 		\centering
 		\fontsize{4}{8}\selectfont
 		\linespread{1.5}
 		\small
 		\begin{tabular}{|c|c|c|c|c|c|c|c|c|}
 			\hline
 			\diagbox[width=7em,height=3em]{ $i$}{ $ (b^{p},b^{c})$} &(0,2)&(1,1)&(1,2) & (2,2) & (2,3) &\makecell{other \vspace{-0.5em}\\actions} 	 \\	\hline
 			$0$ &0.0000 &0.3333 &0.0000&0.6667&0.0000  &0.0000\\ \hline
	 			$1$ &0.0000 &0.0000&0.7499&0.0000&0.2501 &0.0000\\ \hline
 			$2$ &1.0000 &0.0000&0.0000&0.0000&0.0000&0.0000\\ \hline		
 		\end{tabular}	
 		\caption{$0.05$-optimal policy $\psi_{\varepsilon}$ of player 2}\label{DTMG_P2}
 	\end{table}
 
\bibliographystyle{alpha}

\end{document}